\newcommand{\etal}{et.~al.}
\newenvironment{descr}
{\begin{list}{$\bullet$}{\topsep0mm \labelwidth5mm \leftmargin7mm %
  \itemsep1pt plus 2pt \topsep3pt \parsep1pt plus 4pt \labelsep2mm}}
{\end{list}}
\numberwithin{equation}{section}
\newcommand{\note}[1]%
{\noindent\centerline{\fbox{\parbox{.9\textwidth}{\textbf{#1}}}}}
\newcommand{\snote}[1]%
{\fbox{\textbf{#1}}}
\newcommand{\Hos}{\ensuremath{\accentset\circ H^s}}
\newcommand{\step}[1]{\noindent\raisebox{1.5pt}[10pt][0pt]{\tiny\framebox{$#1$}}\xspace}
\renewcommand{\P}{\mathcal{P}}
\newcommand{\T}{\ensuremath{\mathcal T}}
\newcommand{\B}{\ensuremath{\mathcal B}}
\newcommand{\E}{\ensuremath{\mathcal E}}
\newcommand{\Tz}{\T_z}
\newcommand{\Ez}{\E_z}
\newcommand{\VT}{\ensuremath{{\mathbb V}_\T}}
\newcommand{\norma}[1]{\left\| {#1} \right\|}
\newcommand{\scp}[2]{\left\langle{#1}\, , \,{#2}\right\rangle}
\newcommand{\R}{\mathbb R}
\newcommand{\NN}{\mathbb N}
\DeclareMathOperator{\supp}{supp}
\DeclareMathOperator{\dist}{dist}
\DeclareMathOperator{\tr}{tr}
\renewcommand{\SS}{\mathbb S}
\newcommand{\ol}{\overline}
\newcommand{\e}{\ensuremath{u - U}}
\newcommand{\N}{\ensuremath{{\mathcal N}_\ell}}
\newcommand{\V}{\ensuremath{{\mathcal V}}}
\newcommand{\grad}{\nabla}
\newcommand{\vphi}{\ensuremath{\varphi}}
\newcommand{\D}{\displaystyle}
\newcommand{\That}{{\hat T}}
\newcommand{\xhat}{{\hat x}}
\newcommand{\yhat}{{\hat y}}
\newcommand{\psihat}{{\hat \psi}}
\newcommand{\phihat}{{\hat \phi}}
\newcommand{\tw}{\widetilde w}
\DeclareMathOperator{\diam}{diam}
\newtheorem{theorem}{Theorem}[section]
\newtheorem{lemma}[theorem]{Lemma}
\newtheorem{corollary}[theorem]{Corollary}
\newtheoremstyle{examplestyle}
  {6pt}
  {6pt}
  {}
  {}
  {\bfseries}
  {.}
  {1em}
  {}
\theoremstyle{examplestyle}
             \newtheorem{example}[theorem]{Example}
             \newtheorem{remark}[theorem]{Remark}
\newtheoremstyle{algorithmstyle}
  {6pt}
  {6pt}
  {\ttfamily}
  {}
  {\bfseries}
  {.}
  {1em}
  {}
\theoremstyle{algorithmstyle}
\begin{document}

\title[A~posteriori error estimates with point sources] %
  {A posteriori error estimates with point
  \\
  sources  in fractional Sobolev spaces}

\author{Fernando D.\ Gaspoz}
\address{Institut f\"ur Angewandte Mathematik und Numerische Simulation, Universit\"at Stuttgart. Pfaffenwaldring 57, D-70569 Stuttgart, Germany}
\email{fernando.gaspoz@ians.uni-stuttgart.de}

\author{Pedro Morin}
\address{Instituto de Matem\'atica Aplicada del Litoral (CONICET-UNL) and Facultad de Ingenier\'ia Qu\'imica, Universidad Nacional del Litoral, Santa Fe, Argentina}
\curraddr{IMAL. Colectora Ruta Nac.~168, Paraje El Pozo, 3000 Santa Fe, Argentina}
\email{pmorin@santafe-conicet.gov.ar}

\author{Andreas Veeser}
\address{Dipartimento di Matematica, Unversit\`a degli Studi di Milano, Via C.~Saldini 50, 20131~Milano, Italy}
\email{andreas.veeser@unimi.it}

\begin{abstract}
We consider Poisson's equation with a finite number of weighted Dirac masses as a source term, together with its discretization by means of conforming finite elements.  For the error in fractional Sobolev spaces, we propose residual-type a~posteriori estimators with a specifically tailored oscillation and show that, on two-dimensional polygonal domains, they are reliable and locally efficient.  In numerical tests, their use in an adaptive algorithm leads to optimal error decay rates. 
\end{abstract}

\keywords{finite element methods, a~posteriori error estimators, Dirac mass, adaptivity, fractional Sobolev spaces.}

\subjclass[2010]{ 
65N12, 
65N15, 
65N30} 

\maketitle

\section{Introduction}
%
%
%

We consider the problem
\begin{equation}\label{Problem}
 -\Delta u = \sum_{j=1}^N \alpha_j \delta_{x_j} \quad \text{in } \Omega,
\qquad
  u = 0 \quad \text{on } \partial\Omega,
\end{equation}
where $\Omega\subset\R^2$ is a polygonal domain with Lipschitz boundary $\partial\Omega$ and, for any
$j=1,2,\dots , N$, each $\alpha_j \delta_{x_j}$ is a point source given by $\alpha_j \in \R$ and the Dirac measure $\delta_{x_j}$ at $x_j \in \Omega$.
Such point sources are a useful idealization in modeling and appear in various applications: for instance, in modeling the effluent discharge in aquatic media~\cite{ABR2}, in reaction-diffusion problems taking place in domains of different dimension~\cite{DAngelo-Quarteroni}, and in modeling the electric field generated by point charges~\cite{ACRV}.

Since $\Omega\subset\R^2$, point sources induce singularities of the type $\log |\cdot-x_j|$. In particular, they do not belong to $H^{-1}(\Omega)$ and so the solution $u$ of \eqref{Problem} is not in $H^1(\Omega)$.  Hence, the variational formulation within $H^1(\Omega)$ cannot be used for \eqref{Problem} and the usual approach to a~posteriori error estimation in the energy norm is not directly applicable.  

Although $u\not\in H^1(\Omega)$, it can be approximated by finite element methods arising from the variational formulation within $H^1(\Omega)$: e.g., consider the finite element space $\VT$ of continuous functions that are piecewise polynomial up to degree $\ell$ over a triangulation $\T$ of $\Omega$. Then the Galerkin approximation $U\in\VT$ given by
\begin{equation}
\label{Galerkin}
 \int_\Omega \grad U \cdot \grad V
 =
 \sum_{j=1}^N \alpha_j V(x_j),
\quad
 \forall V \in \VT,
\end{equation}
is well-defined thanks to the continuity of the functions in $\VT$.

In view of $u\not\in H^1(\Omega)$, the error of the approximation $U$ has to measured in a norm weaker than the $H^1$-norm.  For quasi-uniform meshes, Babu\v{s}ka \cite{Babuska} and Scott \cite{Scott} derived a~priori estimates for the error in the $H^s$-norm, $s\in[0,1)$.  Exploiting that the singularity of point sources is known, Eriksson~\cite{Eriksson1} proves a~priori estimates for the $L^1$- and $W^{1,1}$-norms that show the advantage of suitably graded meshes.

 
Several a~posteriori error estimates, which may be used to direct an adaptive algorithm, are also available.  Araya \etal~\cite{ABR,ABR2} derived a~posteriori estimates for $L^p$-norm, $p\in(1,\infty)$, and the $W^{1,p}$-norm, $p\in(p_0,2)$, where $p_0\in[1,2)$ depends on $\Omega$.  More recently, Agnelli \etal~\cite{AGM} obtained a~posteriori estimates for the weighted Sobolev norms introduced by D'Angelo~\cite{DAngelo}.

\medskip In this article, we analyze a~posteriori estimators for the error between $u$ and its Galerkin approximation $U$ in the $H^{1-\theta}$-norm, $0<\theta<\frac{1}{2}$.  It is based upon the variational formulation within $H^{1-\theta}(\Omega)$ of Ne\v{c}as~\cite{Necas}.  The error indicators are given by
\begin{equation*}
 \eta_{T , \theta}
 =
 \left[
  h_T^{2+2\theta} \norma{\Delta U}_{L^2 (T )}^2
  +
  h_T^{1+2\theta}
   \norma{\llbracket \nabla U\rrbracket}_{L^2 (\partial T)}^2
  \right]^{\frac{1}{2}}, 
\qquad
  T \in\T,
\end{equation*}
where $h_T=|T|^{1/2}$ stands for the local meshsize and $\llbracket \nabla U\rrbracket$ denotes the jump of the normal derivative across interelement edges.  Our main result is the reliability and the local and global efficiency of these a~posteriori error estimators.  More precisely:

\medskip\noindent\textbf{Main Result.}
\emph{For any $0 < \theta < \frac{1}{2}$, the error of $U\in\VT$ is bounded from above and below in the following manner:
\begin{gather*}
 \norma{u-U}_{H^{1-\theta}(\Omega)}
 \le
 C_{U} \Big(
  \sum_{T \in \T } \eta_{T,\theta}^2
 \Big)^{\frac{1}{2}}\
 +
 \xi_{\theta},
\\
 \eta_{T,\theta}
 \le
 C_{L} \norma{u-U}_{H^{1-\theta}(\omega_T)}
 \;\; (T \in \T),
\qquad
	\Big(\sum_{T \in \T } \eta_{T,\theta}^2 \Big)^{\frac{1}{2}}
	\le C_{L} \norma{u-U}_{H^{1-\theta} ( \Omega)} .
\end{gather*}
The quantity $\xi_{\theta}$ is an oscillation-type term (see Section~\ref{S:upper-bound}) and the constants $C_{U}$, $C_{L}$ depend on $\theta$, $\Omega$, the polynomial degree $\ell$, and the minimum angle of the underlying triangulation.  Moreover, $\omega_T$ is the patch of all the elements sharing a
side with $T$.} 

\medskip\noindent The following comments are in order:
\begin{descr}
\item The indicators $\eta_{T,\theta}$, $T\in\T$, coincide with the standard residual ones for Laplace's equation and so do not depend on the point sources in \eqref{Problem}.
\item The term $\xi_{\theta}$, which depends on the point sources, vanishes under certain conditions, e.g., if the point sources for every star of the triangulation have the same sign; see Remark \ref{R:osc-ref}.  Noteworthy, if this is not already given on the initial grid, it can be always met after a finite number of refinements.  The term $\xi_{\theta}$ is thus a non-standard oscillation term.
\item In the many cases in which $\xi_{\theta}$ vanishes, the error of $U$ is encapsulated only with the help of the approximate solution, without invoking data.
\end{descr}

The rest of the article is organized as follows: Section~\ref{S:posedness} reviews fractional Sobolev spaces, the variational formulation of \eqref{Problem} within $H^{1-\theta}$, its dual problem, and its finite element discretization.  In Section~\ref{S:apost-analysis}, we give a complete definition of the oscillation term $\xi_\theta$ and prove the presented main result. Finally, in Section~\ref{S:experiments}, we test the indicators $\eta_{T,\theta}$, $T\in\T$, in an adaptive algorithm.

Throughout this article, $a \lesssim b$ will denote $a \le C b$ with some constant whose dependence will be stated whenever it is not clear from the context. We will use $a \simeq b$ to denote $a \lesssim b$ and $b \lesssim a$.

\section{Continuous, dual, and discrete problems}
\label{S:posedness}

In this section, we review fractional order Sobolev spaces, as well as the well-posedness of \eqref{Problem} and its dual problem in such spaces.  This will be instrumental for building up the proofs of the a posteriori error bounds.  Moreover, we recall the finite element discretization of \eqref{Problem} and associated notation.


\subsection{Fractional order Sobolev spaces}

Let $G$ be a bounded open set of $\R^d$, $d \in \NN$, with a Lipschitz boundary.  We use the following notation for the (semi)norms of the usual (Hilbertian) Sobolev spaces $H^n(\Omega)$ of integer order $n\in\NN_0$:
\begin{gather*}
 \left\| \phi \right\|_{0,G}
 :=
  \left| \phi\right|_{0,G}
 :=
 \left( \int_G |\phi|^2 \, dx \right)^{\frac12},
\qquad
 \left| \phi\right|_{n,G} 
 :=
 \bigg(
  \sum_{|\beta|= n} \big\|D^{\beta} \phi\big\|_{0,G}^2
 \bigg)^{\frac12},
\\
 \left\| \phi \right\|_{n,G}
 :=
 \bigg(
  \sum_{|\beta|\le n} \big\|D^{\beta} \phi\big\|_{0,G}^2
 \bigg)^{\frac12} 
 =
 \bigg( \sum_{k=0}^n \left| \phi\right|_{k,G}^2 \bigg)^{1/2},
\end{gather*}
where, for any multi-index $\beta = (\beta_1,\beta_2,\dots,\beta_d)\in \NN_0^d$,
its length is defined by $|\beta| := \beta_1 + \beta_2 + \dots + \beta_d$ and $D^\beta \phi$ denotes the weak $\beta$-derivative of $\phi$, with the convention $D^{(0,\dots,0)} \phi = \phi$. 
If $s > 0$ is not an integer, we write $s = n+t$ with $n\in\NN_0$ and $0 < t < 1$ and define the norm of $H^{s}(\Omega)$ by $\left\| \phi \right\|_{s,G}^2
:= 
\left\| \phi \right\|_{n,G}^2  +  \left | \phi \right |_{s,G}^2 
$, with 
\[
\left | \phi \right |_{s,G}
  := \bigg[
       \sum_{|\beta|=n}
  \int_G \int_G \frac{|D^\beta \phi(x)-D^\beta \phi(y)|^2}{|x-y|^{d+2t}}\, dx \, dy
     \bigg]^{\frac12}.
\]
The space $H_0^s(G)$ is the completion of $C_0^\infty(G)$ under the norm $\| \cdot \|_{s,G}$.

\smallskip The following lemma summarizes some basic properties of the fractional Sobolev spaces $H^s(\Omega)$, which can be found, e.g., in~\cite{Hack}.
\begin{lemma}[Fractional Sobolev spaces]
\label{L:basic-properties}
Let $G$ be a bounded open subset of $\R^d$, $d\in\NN$, with a Lipschitz boundary.  We have:
\begin{enumerate}[\rm(i)]
%
%
\item\label{it:shift}(Shift) If $|\beta| \le s$, then $\phi \in H^s(G)$ entails $D^\beta \phi \in H^{s-|\beta|}(G)$. 
\item\label{it:trace}(Trace) If $s > \frac12$, there exists a constant $C$ such that
\begin{equation}\label{E:trace}
 \| \phi \|_{0,\partial G}
 :=
 \| \phi \|_{L^2(\partial G)}
 \le
 C \| \phi \|_{s,G},
\quad \text{for all }\phi \in C^\infty(\overline{G}),
\end{equation}
and thus the \emph{trace operator}, defined in $C^\infty(\overline{G})$ as $\tr \phi = \phi_{|\partial G}$ can be extended to be a continuous operator from $H^{s\vphantom{\beta}}(G)$ into $L^2(\partial G)$.  In particular, we have $\tr \phi = 0$ for all $\phi \in H_0^s(G)$ and $s > \frac{1}{2}$.
%
%
\item\label{it:sobolev-embedding}(Sobolev embedding) If $s > \frac{d}2$ then $H^s(G)$ is continuously embedded into $C^{k,t}(\ol G)$, with $k = \lceil s-\frac{d}2 \rceil -1$ and $s - \frac{d}2 = k + t$. More precisely, if $s-\frac{d}2 = k + t$ with $k \in \NN_0$ and $0 < t \le 1$, then for any function $\phi \in H^s(G)$ there exists a function $\tilde \phi \in C^k(\ol G)$ such that $\phi = \tilde \phi$ a.e.\ in $G$ and 
\[
\max_{|\beta| \le k} 
 \| D^\beta \tilde\phi \|_{L^\infty(G)} + 
\max_{|\beta| = k}  \sup_{\substack{x,y\in G\\ x\neq y}} \frac{|D^\beta\tilde\phi(x) - D^\beta\tilde\phi(y)|}{|x-y|^t}
\le C \| \phi \|_{s,G}.
\]
\end{enumerate}
The constants $C$ appearing in \eqref{it:trace} and \eqref{it:sobolev-embedding} depend only on the set $G$, the dimension $d$, the smoothness order $s$, but are otherwise independent of $\phi$.
\end{lemma}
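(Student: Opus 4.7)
My plan reduces all three statements to analogous facts on $\R^d$ via a bounded linear extension operator $E\colon H^s(G)\to H^s(\R^d)$ satisfying $E\phi = \phi$ on $G$, which is available for Lipschitz $G$. On $\R^d$, $H^s$ admits the equivalent Fourier-side characterization
\[
 \norma{\phi}_{H^s(\R^d)}^2 \simeq \int_{\R^d} (1+|\xi|^2)^s |\hat\phi(\xi)|^2\,d\xi,
\]
so each item will reduce to a pointwise inequality for the symbol $(1+|\xi|^2)^s$.

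For the shift property (i), since $D^\beta \phi$ corresponds to multiplication of $\hat\phi$ by $(i\xi)^\beta$, the elementary bound $|\xi^\beta|^2(1+|\xi|^2)^{s-|\beta|}\le (1+|\xi|^2)^s$, valid whenever $|\beta|\le s$, immediately yields $\norma{D^\beta E\phi}_{H^{s-|\beta|}(\R^d)}\le C\norma{E\phi}_{H^s(\R^d)}$; restriction back to $G$ closes this case.

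For the trace statement (ii), I would first handle the half-space $\R^{d-1}\times (0,\infty)$ by direct Fourier computation: Fubini applied in the $\xi_d$-direction together with a one-dimensional elementary integral gives
\[
 \norma{\phi(\cdot,0)}_{0,\R^{d-1}}^2 \le C\int_{\R^d}(1+|\xi|^2)^s|\hat\phi(\xi)|^2\,d\xi
\]
whenever $s>\tfrac12$. A finite partition of unity subordinate to bi-Lipschitz boundary charts then transports this bound to $\partial G$. Density of $C^\infty(\ol G)$ in $H^s(G)$ closes the argument, and the vanishing trace on $H_0^s(G)$ follows directly from its definition as the closure of $C_0^\infty(G)$.

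For the Sobolev embedding (iii), the starting point is the Fourier inversion formula combined with Cauchy--Schwarz: integrability of $(1+|\xi|^2)^{-s}$ when $s>\tfrac{d}{2}$ produces $\norma{\phi}_{L^\infty(\R^d)}\lesssim \norma{\phi}_{H^s(\R^d)}$ and identifies a continuous representative $\tilde\phi$. Applied to $\phi(x)-\phi(y)$ using the bound $|e^{ix\cdot\xi}-e^{iy\cdot\xi}|\le \min(2,|x-y||\xi|)$, and splitting the integral at the radius $|\xi|=1/|x-y|$, the same argument yields the Hölder modulus with exponent $t=s-\tfrac{d}{2}$. Estimates for the higher derivatives $D^\beta\tilde\phi$ with $|\beta|\le k$ then follow by combining this pointwise argument with the shift property (i). The main technical nuisance I expect is matching the continuous representative obtained on $\R^d$ with one on $\ol G$ satisfying the Hölder estimate uniformly up to the boundary; this is resolved by choosing the extension $E$ so that $E\phi$ has compact support and by approximating $\phi$ with $C^\infty(\ol G)$ functions, for which the abstract and pointwise identifications coincide.
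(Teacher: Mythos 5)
The paper offers no proof of this lemma: it simply records it as a list of well-known facts and cites Hackbusch's book. Your extension-plus-Fourier argument is the standard route to all three parts and is essentially correct, so there is nothing to compare against. A few remarks.

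Items (i) and (ii) are fine as sketched. For (i) the pointwise symbol bound does the job; for (ii) the half-space computation followed by partition of unity and bi-Lipschitz straightening is the usual proof, and the vanishing-trace conclusion on $H^s_0(G)$ follows as you say from density of $C_0^\infty(G)$.

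For (iii), be a little careful with the wording: you write that the split-integral argument "yields the Hölder modulus with exponent $t=s-\tfrac{d}{2}$", but in the statement $t$ is the \emph{fractional part}, $s-\tfrac d2=k+t$, and the pointwise argument with $\min(2,|x-y|\,|\xi|)$ is applied not to $\phi$ directly but to $D^\beta E\phi$ for $|\beta|=k$, which lies in $H^{s-k}(\R^d)$ with $s-k-\tfrac d2=t$. You do gesture at this via the shift property, but it is worth stating explicitly. More substantively, the Fourier split at $|\xi|=1/|x-y|$ only produces the clean bound $\lesssim |x-y|^{t}$ when $0<t<1$; at $t=1$ (i.e.\ when $s-\tfrac d2$ is a positive integer) the low-frequency integral picks up a logarithm and the argument fails --- and indeed $H^{d/2+1}(\R^d)\not\hookrightarrow C^{0,1}$. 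This is actually a small inaccuracy in the lemma as stated (it allows $t=1$). It is, however, harmless for the paper, which invokes (iii) only with $d=2$ and $s\in(1,\tfrac32)$, so that $k=0$ and $t=s-1\in(0,\tfrac12)$; you may simply note the restriction $t<1$ or exclude the integer case.

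Finally, the concluding identification of the continuous representative on $\overline G$ is fine: with a bounded extension $E$ and a cut-off to obtain compact support, restricting $E\phi$'s continuous Fourier representative to $\overline G$ gives $\tilde\phi$, and the stated bound follows from the corresponding bound on $\R^d$ together with the boundedness of $E$.
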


\smallskip We shall need several inequalities involving fractional Sobolev seminorms on domains (bounded, connected and open sets).  The first one is the counterpart of the classical Poincar\'e inequality for functions in
\begin{equation*}
 \Hos(G)
 :=
 \left\{ \phi \in H^s(G) : \int_G \phi = 0 \right\}.
\end{equation*}
We shall use it to derive further inequalities, as well as for the proof of the lower a~posteriori error bounds.

\begin{lemma}[Fractional Poincar\'e inequality]
\label{L:Poincare-meanvalue}
If $0<s<1$ and $G$ is a bounded domain of $\R^d$, there is a constant $C_P$ depending on $s$ and $G$ such that 
\[
 \| \phi \|_{0,G}^2
 \le
 C_P 
 |\phi|_{s,G}^2
,\qquad\text{for all } \phi\in \Hos(G).
\]
\end{lemma}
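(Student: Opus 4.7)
The plan is to argue by contradiction using a compactness argument, a classical strategy for this kind of Poincaré inequality.

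First, I would suppose the inequality fails. Then there exists a sequence $\{\phi_n\}\subset \Hos(G)$ with $\|\phi_n\|_{0,G}=1$ and $|\phi_n|_{s,G}\to 0$. In particular, $\{\phi_n\}$ is bounded in $H^s(G)$.

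Next, I would invoke the Rellich–Kondrachov-type compact embedding $H^s(G) \hookrightarrow\hookrightarrow L^2(G)$, valid for $0<s<1$ on a bounded Lipschitz domain, to extract a subsequence (not relabeled) converging in $L^2(G)$ to some $\phi\in L^2(G)$. Passing to a further subsequence that converges weakly in $H^s(G)$, by the lower semicontinuity of the Gagliardo seminorm with respect to weak $H^s$-convergence (or, equivalently, with respect to $L^2$-convergence combined with bounded seminorms, via Fatou's lemma applied to the double integral), we obtain
\[
|\phi|_{s,G} \le \liminf_{n\to\infty} |\phi_n|_{s,G} = 0.
\]

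The third step is to extract the consequence $|\phi|_{s,G}=0$: Fatou/Fubini force $\phi(x)=\phi(y)$ for a.e.\ $(x,y)\in G\times G$, so $\phi$ is constant a.e.\ on $G$ (here the connectedness of the domain $G$ is used, although since $G$ is connected this is immediate from Fubini). The mean-zero condition $\int_G\phi=0$ then forces $\phi\equiv 0$, contradicting $\|\phi\|_{0,G} = \lim\|\phi_n\|_{0,G}=1$.

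The main obstacle I anticipate is the compactness of the embedding $H^s(G)\hookrightarrow L^2(G)$ and the lower semicontinuity of the Gagliardo seminorm; neither is stated in Lemma~\ref{L:basic-properties}, but both are standard facts for bounded Lipschitz domains that can be cited (e.g.\ from \cite{Hack}). The remainder is routine.
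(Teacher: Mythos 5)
Your compactness argument is a valid strategy and, for the domains that actually matter in this paper (triangles and their patches), it would establish the inequality; but it is a genuinely different route from the paper's, and it is worth seeing where the two diverge. The paper cites Faermann's one-line proof: since $\int_G\phi=0$, Jensen's inequality gives
\begin{equation*}
 \|\phi\|_{0,G}^2
 =\int_G\Big|\tfrac{1}{|G|}\int_G\big(\phi(x)-\phi(y)\big)\,dy\Big|^2dx
 \le \tfrac{1}{|G|}\int_G\int_G|\phi(x)-\phi(y)|^2\,dy\,dx
 \le \tfrac{\diam(G)^{d+2s}}{|G|}\,|\phi|_{s,G}^2,
\end{equation*}
using only $|x-y|\le\diam(G)$. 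This is constructive, yields the explicit bound $C_P\le\tfrac12\,\diam(G)^{d+2s}/|G|$ that the paper records (useful because the lemma is invoked on scaled reference elements and because the $s$-dependence matters, cf.\ the reference to Bourgain--Brezis--Mironescu), and requires nothing of $G$ beyond being bounded and open. Your proof, by contrast, hinges on the compact embedding $H^s(G)\hookrightarrow\hookrightarrow L^2(G)$, which is \emph{not} a consequence of $G$ being merely a bounded domain: it is known for bounded $W^{s,p}$-extension domains (in particular Lipschitz ones), but it can fail on irregular bounded domains, and the lemma as stated assumes only ``bounded domain.'' So either you must strengthen the hypothesis to Lipschitz (harmless for the paper's applications, but it changes the statement), or you must supply a compactness theorem valid under the stated hypotheses, which I do not think you can cite off the shelf. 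A secondary cost is that the contradiction argument is non-quantitative: it gives existence of $C_P$ but no control on it, whereas the paper uses the explicit scaling. Your handling of the lower semicontinuity via Fatou along an a.e.-convergent subsequence and the mean-zero conclusion is fine; the only real gap is the domain regularity hidden in the compact-embedding step.
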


\begin{proof}
The simple proof of Faermann \cite[Lemma~3.4]{F} (which readily generalizes to $d$-dimensional domains) shows
\begin{equation}
\label{CP<=}
 C_P \le \frac{1}{2}\frac{\diam(G)^{d+2s}}{|G|},
\end{equation}
where $\diam(G)$ and $|G|$ stand for the diameter and the Lebesgue measure of $G$, respectively.  The dependence on $s$ can be identified more precisely, see, e.g., Bourgain \etal\ \cite{Bourgain.Brezis.Mironescu:02}, but \eqref{CP<=} suffices for our purposes.
\end{proof}

We need also the following generalization of the Friedrichs inequality.  It is related to the well-posedness of \eqref{Problem} and the definiteness of the error notion considered in the a~posteriori analysis below.

\begin{lemma}[Fractional Friedrichs inequality]
\label{L:Poincare}
Let $s > 1/2$ and $G\subset\R^d$ be a bounded domain with Lipschitz boundary $\partial G$.  We have 
\begin{equation}\label{Poincare}
 \| \phi \|_{0,G}
 \le
  C_{F} |\phi|_{s,G},
\qquad\text{for all } \phi\in H_0^s(G),
\end{equation}
where the constant $C_{F}$ depends on $d$, $s$ and $G$.
\end{lemma}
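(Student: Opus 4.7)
The plan is to argue by contradiction and compactness, using the fractional Poincar\'e inequality (Lemma~\ref{L:Poincare-meanvalue}), the trace theorem (Lemma~\ref{L:basic-properties}(\ref{it:trace})), and the compact embedding $H^s(G) \hookrightarrow L^2(G)$ available on bounded Lipschitz domains. The essential case is $1/2 < s < 1$, which is also the range actually used in the paper; the case $s \ge 1$ will then be reduced to either the classical Friedrichs inequality or the fractional one applied to first-order derivatives.

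For $1/2 < s < 1$, I would assume the inequality fails. Then there is a sequence $(\phi_n) \subset H_0^s(G)$ with $\|\phi_n\|_{0,G} = 1$ and $|\phi_n|_{s,G} \to 0$. Because $\|\phi_n\|_{s,G}^2 = \|\phi_n\|_{0,G}^2 + |\phi_n|_{s,G}^2$ remains bounded, Rellich--Kondrachov supplies a subsequence converging in $L^2(G)$ to some $\phi$ with $\|\phi\|_{0,G} = 1$. Fatou applied to the Gagliardo double integral yields $|\phi|_{s,G} \le \liminf_n |\phi_n|_{s,G} = 0$, forcing $\phi$ to be constant on the connected set $G$. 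Extracting further a weakly convergent subsequence in $H^s(G)$, whose weak limit coincides with $\phi$ by uniqueness in $L^2$, and exploiting the norm-closedness (hence weak closedness) of $H_0^s(G)$ in $H^s(G)$, one obtains $\phi \in H_0^s(G)$. Lemma~\ref{L:basic-properties}(\ref{it:trace}) then gives $\tr \phi = 0$, so $\phi \equiv 0$, contradicting $\|\phi\|_{0,G} = 1$.

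For $s \ge 1$, the inclusion $H_0^s(G) \subset H_0^1(G)$ reduces the task, via the classical Friedrichs inequality, to bounding $|\phi|_{1,G}$ by $|\phi|_{s,G}$. If $1 < s < 2$, the shift property (Lemma~\ref{L:basic-properties}(\ref{it:shift})) gives $\partial_i \phi \in H^{s-1}(G)$, and Green's theorem combined with $\phi \in H_0^1(G)$ yields $\int_G \partial_i \phi = 0$; applying Lemma~\ref{L:Poincare-meanvalue} componentwise then produces $|\phi|_{1,G}^2 \lesssim |\phi|_{s,G}^2$. For $s \ge 2$ one iterates the same reduction on successive derivatives. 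The main obstacle I anticipate is ensuring that the weak $H^s$-limit in the compactness step really lies in $H_0^s(G)$, since only then does the trace theorem apply to force the constant to vanish; this is handled by the fact that $H_0^s(G)$ is closed in $H^s(G)$ by its very definition as the closure of $C_0^\infty(G)$, and hence weakly closed.
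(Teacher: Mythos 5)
Your proof is correct, but it takes a genuinely different route for the central case $\tfrac12 < s < 1$. You argue by contradiction and compactness: normalize a putative violating sequence in $L^2$, invoke Rellich--Kondrachov to extract an $L^2$-limit $\phi$ with $\|\phi\|_{0,G}=1$, use Fatou to see $|\phi|_{s,G}=0$ (hence $\phi$ constant a.e.), pass to a weak $H^s$-limit to place $\phi$ in the weakly closed subspace $H_0^s(G)$, and then apply the trace theorem to force the constant to vanish. The paper's proof is instead direct and quantitative: since $\tr\phi=0$, one may write $\phi=(\phi-c)-\frac{1}{|\partial G|}\int_{\partial G}(\phi-c)$ for any constant $c$, control the boundary mean via the trace theorem, choose $c$ to be the domain mean, and conclude with Lemma~\ref{L:Poincare-meanvalue}. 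The paper's route stays entirely inside its stated toolbox and produces an explicit $C_F$ in terms of the trace and fractional Poincar\'e constants; your argument is slicker to state but brings in the compact embedding $H^s(G)\hookrightarrow L^2(G)$ (true on bounded Lipschitz domains, but not among the facts collected in Lemma~\ref{L:basic-properties}) and, being indirect, gives no control on the constant. For $s\ge 1$ your reduction to derivatives matches the substance of the paper's Step~3, except that the paper avoids Green's theorem and any trace discussion by working directly with $\phi\in C_0^\infty(G)$ (so $\int_G D^\beta\phi=0$ is immediate) and extending by density, and it jumps straight to multi-indices of length equal to the largest integer below $s$ rather than iterating one derivative at a time.
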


\begin{proof}
We distinguish different cases for $s$.

\step{1} If $s \in \NN$, the claimed inequality is the Friedrichs inequality for Sobolev spaces of integer order; see, e.g., \cite[Ch.~II, 1.7]{B}.
  
\step{2} Let $\frac12 < s <1$ and fix $\phi\in H_0^s(G)$.  For any constant $c\in\R$, we write 
\[
 \phi
 =
 \phi - \frac{1}{|\partial G|} \int_{\partial G} \phi
 =
 (\phi-c) - \frac{1}{|\partial G|} \int_{\partial G} (\phi-c),
\]
where $|\partial G|$ denotes the $(d-1)$-dimensional Hausdorff measure in $\R^d$.  Thus, the Cauchy-Schwarz inequality on $\partial G$ and the trace theorem \eqref{E:trace} imply
\[
 \| \phi \|_{0,G}
 \le
 \| \phi - c\|_{0,G}
 +
 \left( \frac{|G|}{|\partial G|} \right)^{1/2}
  \| \phi - c\|_{0,\partial G}
 \le
 \| \phi - c\|_{0,G}
 +
 C \| \phi - c\|_{s,G}
\]
with $C$ depending on $G$.  We choose $c=|G|^{-1}\int_G \phi$ and 
obtain the claimed inequality in this case with the help of Lemma \ref{L:Poincare-meanvalue}.

\step{3} Let $k < s < k+1$ with $k\in\NN$ and assume, without loss of generality, that $\phi \in C_0^\infty(G)$.  Observe that, for any multi-index $\beta$ with $|\beta| = k$, we have $\int_G D^\beta \phi = 0$ and, therefore, Lemma~\ref{L:Poincare-meanvalue} yields
\[
 \| D^\beta \phi \|_{0,G}
 \le
 C |D^\beta \phi|_{s-k,G},
\quad\text{whence}\quad
 | \phi |_{k,G}
 \le
 C | \phi |_{s,G}.
\]
The claimed inequality then follows from Step~\step{1}.
\end{proof}

For any $s > 0$, we define $H^{-s}(G)$ as the topological dual space of $H_0^s(G)$. In other words: $H^{-s}(G)$ is the set of linear functionals $\psi :H_0^s(G) \to \R$ satisfying
\[
| \langle \psi, \phi \rangle | \le C \| \phi \|_{s,G}, \quad\text{for all }\phi \in H_0^s(G),
\]
for some constant $C$ independent of $\phi$. Hereafter, the symbol $\scp{\cdot}{\cdot}$ indicates the result of the application of a functional to a function in its domain of definition.

The spaces $H^{-s}(G)$ with $s>\frac{1}{2}$ are of particular interest for us.  It is convenient to equip them with norms that have simple scaling properties, like seminorms.  Lemma \ref{L:Poincare} ensures  that, for such $s$, the seminorm $|\cdot|_{s,G}$ is a norm in $H_0^s(G)$, equivalent to $\| \cdot \|_{s,G}$.  We thus can define a suitable norm for $H^{-s}(G)$ with $s>\frac{1}{2}$ by duality:
\[
 \left\| \psi \right\|_{-s,G}
 :=
 \sup_{\phi\in H_0^s(G)}
  \frac{ \langle \psi , \phi \rangle}
   {\left| \phi \right |_{s,G}}
 =
 \sup_{\phi\in C_0^\infty(G)}
  \frac{ \langle \psi , \phi \rangle}
   {\left| \phi \right |_{s,G}},
\quad \text{ for }\psi \in H^{-s}(G).
\]
We then have also
\begin{equation}
\label{norm-duality}
 \D \left| \phi \right|_{s,G} 
 =
 \sup_{\psi \in H^{-s}(G)}
  \frac{\langle \psi, \phi \rangle}
     {\left\| \psi \right\|_{-s,G}},
\quad\text{ for }\phi \in H_0^s(G).
\end{equation}
For the sake of brevity, we will write $H^s$, $H_0^s$, $\| \cdot \|_s$, to denote $H^s(\Omega)$, $H_0^s(\Omega)$, $\| \cdot \|_{s,\Omega}$, respectively.

\smallskip Finally, we need a Bramble--Hilbert-type inequality; it will be useful in deriving the a~posteriori upper error bound.  For this purpose, it is sufficient to consider triangular domains.   Moreover, in view of the following lemma, it is sufficient to derive such, and other, inequalities for the reference triangle $\Hat{T}$ in $\R^2$ given by the vertices $(0,0)$, $(1,0)$, and $(0,1)$.

\begin{lemma}[Scaling properties of Sobolev norms]
\label{L:scaling}
Let $T \subset \Omega$ be a triangle, set $h_T:=|T|^{1/2}$ and let $F(\xhat) = A \xhat + b$ be an affine bijection such that $F(\That)=T$.
\begin{enumerate}[\rm(i)]
\item Assume that the functions $\phi:T\to\R$ and $\phihat:\That\to\R$ satisfy $\phihat=\phi\circ F$.  Then, for any $s\ge0$, we have $\phi\in H^s(T)$ if and only if $\phihat\in H^s(\That)$ and
\[
 | \phi |_{s,T}
 \simeq
 h_T^{1-s} |\phihat|_{s,\That}.
\]
\item Assume that the distributions $\psi:C^\infty_0(T)\to\R$ and $\psihat:C^\infty_0(\That)\to\R$ satisfy $\langle\psihat,\phi\circ F\rangle = |\det(A)|^{-1} \langle\psi,\phi\rangle$ for all $\phi \in C_0^\infty(T)$. Then, for any $s>\frac12$, we have $\psi\in H^{-s}(T)$ if and only if $\psihat\in H^{-s}(\That)$ and
\[
 \norma{\psi}_{-s,T}
 \simeq
 h_T^{1+s} \norma{\psihat}_{-s,\That}.
\]
\end{enumerate}
The hidden constants depend only on $s$ and the minimum angle of $T$.
\end{lemma}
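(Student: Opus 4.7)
The plan is to derive both scaling results from the standard affine-map estimates
\[
\|A\|\simeq h_T,\qquad \|A^{-1}\|\simeq h_T^{-1},\qquad |\det A|\simeq h_T^2,
\]
with hidden constants depending only on the minimum angle of $T$. The integer case of (i) is the usual chain-rule-plus-change-of-variables argument; the fractional case reduces to an analogous substitution in the Gagliardo double integral; part (ii) then follows from (i) by duality.

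For part (i) with integer order $s=n\in\NN_0$, I would use the chain rule
$D^\beta\hat\phi=\sum_{|\gamma|=n}c_{\beta\gamma}(A)(D^\gamma\phi)\circ F$ with $|c_{\beta\gamma}(A)|\lesssim\|A\|^n\simeq h_T^n$, together with the change of variables $\int_{\hat T}\cdot\,d\hat x=|\det A|^{-1}\int_T\cdot\,dx$. Summing over $|\beta|=n$ yields $|\hat\phi|_{n,\hat T}^2\simeq h_T^{2n}|\det A|^{-1}|\phi|_{n,T}^2\simeq h_T^{2(n-1)}|\phi|_{n,T}^2$, and the reverse inequality is obtained by applying the same reasoning to $F^{-1}$. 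For non-integer $s=n+t$ with $0<t<1$, I would apply the substitution $x=F(\hat x)$, $y=F(\hat y)$ to the double integral defining $|\hat\phi|_{s,\hat T}$. The Jacobians contribute $|\det A|^{-2}$, the differences of derivatives contribute a factor $h_T^{2n}$ as in the integer case, and the singular denominator yields $|\hat x-\hat y|^{-(d+2t)}=|A^{-1}(x-y)|^{-(d+2t)}\simeq h_T^{d+2t}|x-y|^{-(d+2t)}$ thanks to the uniform two-sided bound $|A^{-1}v|\simeq h_T^{-1}|v|$. With $d=2$ the exponents collect to $h_T^{2n+2+2t-4}=h_T^{2(s-1)}$, proving the claim.

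For part (ii), the defining relation can be rewritten as $\langle\psi,\phi\rangle=|\det A|\,\langle\hat\psi,\phi\circ F\rangle$ for every $\phi\in C_0^\infty(T)$. Since $\phi\mapsto\phi\circ F$ is a bijection of $C_0^\infty(T)$ onto $C_0^\infty(\hat T)$, taking the supremum in the definition of the dual norm and invoking part (i) gives
\[
\|\psi\|_{-s,T}=\sup_{\phi\in C_0^\infty(T)}\frac{\langle\psi,\phi\rangle}{|\phi|_{s,T}}\simeq\sup_{\hat\phi\in C_0^\infty(\hat T)}\frac{|\det A|\,\langle\hat\psi,\hat\phi\rangle}{h_T^{1-s}\,|\hat\phi|_{s,\hat T}}\simeq h_T^{1+s}\|\hat\psi\|_{-s,\hat T},
\]
where the hypothesis $s>\tfrac12$ enters only through \lemmaref{L:Poincare} to ensure that $|\cdot|_{s,\cdot}$ is a genuine norm on $H_0^s$ (so that the dual norms are well defined). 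The estimate is two-sided because the equivalence in (i) is two-sided.

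The main obstacle I anticipate is the fractional part of (i): one has to verify that the shape-regularity constant in $|A^{-1}v|\simeq h_T^{-1}|v|$ is uniform in $v\in\R^d$ and depends only on the minimum angle of $T$, and then keep careful track of the exponents $2n$, $d+2t$, $-2d$ coming from the three factors of the transformed integrand. Everything else is standard bookkeeping.
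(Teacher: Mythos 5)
Your proposal is correct and follows essentially the same route as the paper: the same three facts $\|A\|\simeq h_T$, $\|A^{-1}\|\simeq h_T^{-1}$, $|\det A|\simeq h_T^2$; the integer case as the classical reference result; the fractional case by the change of variables $x=F(\hat x)$, $y=F(\hat y)$ in the Gagliardo double integral with the two-sided bound $|A^{-1}v|\simeq h_T^{-1}|v|$; and part (ii) by rewriting the dual-norm supremum and plugging in part (i), exactly where $s>\tfrac12$ is needed so that $|\cdot|_{s}$ is a norm on $H_0^s$. The only cosmetic difference is that the paper first treats $0<s<1$ and then reduces $k<s<k+1$ to that case via the chain-rule identity for order-$k$ derivatives, whereas you handle $s=n+t$ in a single pass; your exponent bookkeeping $h_T^{-4}\cdot h_T^{2n}\cdot h_T^{2+2t}=h_T^{2(s-1)}$ checks out.
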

\begin{proof}
Up to constants depending on the minimum angle of $T$, we have
\[
 |\det(A)| \simeq h_T^2,
\quad
 \|A\| \simeq h_T,
\quad\text{and}\quad
 \|A^{-1}\|\simeq h_T^{-1}.
\]
The case $s\in\NN_0$ is well known, see, for instance, \cite[Ch. II, 6.6]{B}.  Next, consider $0< s < 1$. Given $\phi \in H^s(T)$, we derive 	\begin{align*}
 \left| \phi \right|_{s,T}^2  
 &=
 \int_{T}\int_{T} \frac{|\phi(x)-\phi(y)|^2}{|x-y|^{2+2 s}} \,dx\,dy 
 =
 \int_{\That}\int_{\That}
  \frac{|\phihat(\xhat)-\phihat(\yhat)|^2}
    {|A (\xhat - \yhat )|^{2+2 s}} |\det(A)|^2 \,d\xhat \,d\yhat 
\\
 &\le
  \frac{|\det(A)|^2}{\| A^{-1} \|^{-(2+2s)}}
  \int_{\That}\int_{\That}
   \frac{|\phihat(\xhat)-\phihat(\yhat)|^2}
    {|\xhat - \yhat|^{2+2 s}}  \,d\xhat \,d\yhat 
 \lesssim
 h_T^{2-2s} |\phihat|_{s,\That}^2,
\end{align*}
where we have used that $|A(\xhat-\yhat)| \ge \| A^{-1} \|^{-1} |\xhat-\yhat|$.  The opposite inequality follows in a similar manner using $|A(\xhat - \yhat)| \le \|A\| |\xhat - \yhat|$ and (i) is thus verified also for $0<s<1$.
	
For the case $k < s < k+1$ with $k \in \NN$, we observe 
\begin{equation*}
 \sum_{|\beta|=k}
  |D^\beta \phi(x)- D^\beta \phi(y)|
 \simeq
 \|A^{-1}\|^k
 \sum_{|\beta|=k}
  |D^\beta \phihat(\hat{x}) - D^\beta \phihat(\hat{x})|
\end{equation*}
and then proceed for each term on the right as in the case $0<s<1$.	

It remains to verify (ii).  Given $\psi \in H^{-s}(T)$ with $s>\frac{1}{2}$, we obtain
\[
 |\psi|_{-s , T}
 =
 \sup_{\phi \in C^{\infty}_0 (T)} 
  \frac{\langle \psi ,\phi \rangle}{|\phi|_{s , T}}  
 \simeq
 \sup_{\hat{\phi} \in C^{\infty}_0 (\That)} 
	\frac{ |\det(A)| \langle \psihat, \phihat\rangle}
	{ h_T^{1-s} |\phihat|_{s, \That}}
 \simeq
 h_T^{1+s} |\psihat|_{-s,\That}
\]
with the help of (i).	
\end{proof}

\begin{lemma}[A fractional Bramble--Hilbert inequality]
\label{L:Poincare-bis}
Let $1<s<2$.  There is constant $C_{BH}$ depending only on $s$ such that, if $\phi\in H^s(\Hat{T})$ vanishes at the vertices of $\Hat{T}$, then
\[
 \| \phi \|_{s,\hat T}
 \le
 C_{BH} | \phi |_{s, \hat T}.
\]
\end{lemma}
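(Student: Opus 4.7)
My plan is to compare $\phi$ to a suitable affine function $\ell$ chosen so that $\phi - \ell$ has both vanishing mean and vanishing mean-gradient on $\hat T$. Concretely, set $b := |\hat T|^{-1}\int_{\hat T}\nabla\phi \in \R^2$ and $\ell(x) := c + b\cdot x$ with $c \in \R$ adjusted so that $\int_{\hat T}(\phi - \ell) = 0$. Since $\nabla\ell \equiv b$ is constant, the fractional seminorm satisfies $|\phi - \ell|_{s,\hat T} = |\phi|_{s,\hat T}$. Applying \lemmaref{L:Poincare-meanvalue} componentwise to the mean-zero functions $\partial_i\phi - b_i$ with exponent $s-1 \in (0,1)$ controls $|\phi - \ell|_{1,\hat T}$ by $|\phi|_{s,\hat T}$, and the classical Poincar\'e inequality for mean-zero functions then bounds $\|\phi - \ell\|_{0,\hat T}$ by $|\phi - \ell|_{1, \hat T}$. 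Altogether, $\|\phi - \ell\|_{s, \hat T} \lesssim |\phi|_{s, \hat T}$.

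To recover $\ell$ itself, I use the vertex-vanishing of $\phi$. Since $s > 1 = d/2$ in two dimensions, \lemmaref{L:basic-properties}\,(iii) supplies the continuous embedding $H^s(\hat T) \hookrightarrow C(\overline{\hat T})$; for each vertex $v_i$ of $\hat T$ we then have $|\ell(v_i)| = |(\ell - \phi)(v_i)| \lesssim \|\phi - \ell\|_{s, \hat T} \lesssim |\phi|_{s,\hat T}$. Because the affine functions on $\hat T$ form a three-dimensional space on which $\ell \mapsto \max_{i=1,2,3}|\ell(v_i)|$ is a norm, the equivalence of norms on finite-dimensional spaces gives $\|\ell\|_{s, \hat T} \lesssim \max_i |\ell(v_i)| \lesssim |\phi|_{s,\hat T}$. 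A final triangle inequality closes the argument.

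The main delicate point is the choice of $\ell$: it must simultaneously match the mean value and the mean gradient of $\phi$ so that \emph{both} the fractional Poincar\'e inequality (for the mean-zero first derivatives $\partial_i\phi - b_i$) and the classical Poincar\'e inequality (for the mean-zero remainder $\phi - \ell$) are available; the vertex-vanishing hypothesis, too weak on its own, then combines with the $H^s \hookrightarrow C(\overline{\hat T})$ embedding to pin down the affine contribution. An alternative, less explicit route would be a Peetre--Tartar compactness argument exploiting the compact embedding $H^s(\hat T) \hookrightarrow H^1(\hat T)$ for $s>1$, the continuity of point evaluation on $H^s(\hat T)$, and the fact that $|\phi|_{s,\hat T}=0$ forces $\phi$ to be affine (since then every first-order partial derivative is constant).
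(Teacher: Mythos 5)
Your proposal is correct and follows essentially the same route as the paper: the same choice of affine comparison function $p=\ell$ matching mean and mean gradient, the fractional Poincar\'e inequality of Lemma~\ref{L:Poincare-meanvalue} for the mean-zero derivatives and the classical one for $\phi-\ell$, and the Sobolev embedding to control the affine part via the vertex values. The only cosmetic difference is that the paper packages the recovery of the affine part through the decomposition $\phi=(\phi-p)-\Hat I_1(\phi-p)$ and the boundedness $\|\Hat I_1\psi\|_{1,\Hat T}\lesssim\max_{\Hat T}|\psi|$ of the Lagrange interpolant, whereas you phrase it directly as equivalence of norms on the three-dimensional space $\P^1$; these are the same observation.
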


\begin{proof}
We let $\Hat{I}_1$ denote the Lagrange interpolation operator onto the polynomials $\P^1$ of degree at most 1.  Given any polynomial $p\in\P^1$ of degree at most 1, we may write
\[
 \phi
 =
 \phi - \Hat{I}_1\phi
 =
 (\phi-p) + \Hat{I}_1(\phi- p).
\]
Since $\| \Hat{I}_1\psi \|_{1,\Hat{T}} \leq C \max_{\Hat{T}} |\psi|$ for some constant $C$, Lemma~\ref{L:basic-properties}~(\ref{it:sobolev-embedding}) yields
\[
 \| \phi \|_{1,\Hat{T}}
 \le
 C \| \phi- p\|_{s,\Hat{T}},
\]
where $C$ additionally depends on $s>1$.  We choose $p\in\mathcal{P}^1$ such that $\int_{\Hat{T}} p = \int_{\Hat{T}} \phi$ and $\int_{\Hat{T}} \partial_i p = \int_{\Hat{T}} \partial_i \phi$ for $i=1,2$.  Thus, we can conclude with the help of the classical Poincar\'e inequality and its counterpart Lemma \ref{L:Poincare-meanvalue}. 
\end{proof}

\subsection{Continuous and dual problem}
%
%
We assume that $\Omega\subset\R^2$ is a two-di\-men\-sion\-al polygonal, but not necessarily convex domain with Lipschitz boundary $\partial\Omega$.  For any $\theta>0$, Lemma~\ref{L:basic-properties}~(\ref{it:sobolev-embedding}) then implies
that $H^{1+\theta} $ is continuously embedded in $C^0(\ol\Omega)$ and, therefore, the right-hand side of Poisson's equation in \eqref{Problem} belongs to the space $H^{-1-\theta}$.

Writing $s=1-\theta$, we are thus led to consider the Dirichlet problem
\begin{equation}
\label{Poisson}
 -\Delta u = f \quad \text{in } \Omega,
\qquad
  u = 0 \quad \text{on } \partial\Omega,
\end{equation}
where $f \in H^{s-2}$ and $s<1$.  Expecting that the solution is in $H^s$, we additionally require $s>\frac{1}{2}$ to provide a meaning to the boundary condition in \eqref{Poisson} by means of  Lemma~\ref{L:basic-properties}~\eqref{it:trace}.  Furthermore, since we shall invoke duality arguments, it will be useful to consider also the range $1<s<\frac{3}{2}$.  In any case, the differential operator $-\Delta$ should be understood in the distributional sense:
\[
 \scp{ -\Delta u}{ \varphi}
 :=
 \scp{ u }{ -\Delta \varphi } ,
\qquad \forall \varphi \in C_0^\infty(\Omega).
\]
Integration by parts shows that, if $v\in C_0^\infty(\Omega)$, then
\begin{equation}
\label{org-B}
  \scp{ v }{ -\Delta \varphi }
  =
 \B [v, \varphi]
 :=
 \int_\Omega \nabla v \cdot \nabla \varphi \ dx.
\end{equation}
Ne\v{c}as \cite{Necas} extended $\B$ to a continuous bilinear form on $H^s_0\times H^{2-s}_0$ by means of the following lemma. 
\begin{lemma}[Weak derivative and fractional spaces]
\label{L:Necas-4.1}
Let $0 \le \theta < 1/2$ and $G$ be a bounded Lipschitz domain of $\R^d$.  Then there exists a constant $C$ such that
\[
 \int_G \frac{\partial f}{\partial x_i} g \, dx 
 \le
 C \| f \|_{1-\theta,G} \| g \|_{\theta,G},
\qquad\text{for all } f,g \in C^\infty(G).
\]
\end{lemma}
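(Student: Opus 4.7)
The plan is to obtain the estimate by bilinear interpolation between two easy endpoint cases for the form $B(f,g) := \int_G \frac{\partial f}{\partial x_i}\, g \, dx$, initially defined on $C^\infty(\overline G) \times C^\infty(\overline G)$.

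The first endpoint is trivial: the Cauchy--Schwarz inequality gives $|B(f,g)| \le \|f\|_{1,G}\,\|g\|_{0,G}$, so $B$ extends continuously to $H^1(G) \times L^2(G)$. For the second endpoint I would temporarily restrict $g$ to $C_0^\infty(G)$ so that integration by parts produces no boundary term, giving $B(f,g) = -\int_G f\, \frac{\partial g}{\partial x_i}\,dx$ and hence $|B(f,g)| \le \|f\|_{0,G}\,\|g\|_{1,G}$; this extends $B$ continuously to $L^2(G) \times H_0^1(G)$.

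Next I would apply the complex bilinear interpolation theorem: for every $\theta \in (0,1)$, $B$ is continuous on $[H^1(G),L^2(G)]_\theta \times [L^2(G),H_0^1(G)]_\theta$. For Lipschitz domains the first space equals $H^{1-\theta}(G)$ with equivalent norms, which is standard, so it remains to identify the second interpolation space.

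The main obstacle, and the exact source of the restriction $\theta < 1/2$, is precisely this identification: one needs $[L^2(G),H_0^1(G)]_\theta = H^\theta(G)$. This rests on the density of $C_0^\infty(G)$ in $H^\theta(G)$, which holds precisely for $0 \le \theta < 1/2$; at $\theta = 1/2$ the interpolation space shrinks to the strictly smaller Lions--Magenes space $H_{00}^{1/2}(G)$ and the desired estimate on general $g \in C^\infty(\overline G)$ would fail. Once this identification is in place, the interpolation inequality, combined with a density argument in $H^\theta(G)$ to pass from $g \in C_0^\infty(G)$ back to general $g \in C^\infty(\overline G)$, delivers the claimed bound.
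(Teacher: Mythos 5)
The paper does not prove this lemma; it simply cites Ne\v{c}as~\cite{Necas} for the statement, and Grisvard \cite[Prop.~1.4.4.8]{Grisvard:85} for the sharpness of the bound $\theta<1/2$.  So there is no in-paper proof to match your argument against; I can only assess it on its own terms.

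Your interpolation route is sound and is one of the standard ways to obtain such an estimate.  The two endpoint bounds are correct, Calder\'on's bilinear complex interpolation theorem does apply on $H^1(G)\times H_0^1(G)$ where both endpoint estimates hold simultaneously, and $[H^1(G),L^2(G)]_\theta=H^{1-\theta}(G)$ with equivalent norms for a bounded Lipschitz domain.  The density step at the end, passing from $g\in C_0^\infty(G)$ to $g\in C^\infty(\overline G)$ for fixed smooth $f$, is also fine since $B(f,\cdot)$ is $L^2$-continuous.  The one inaccuracy is your attribution of the restriction $\theta<\tfrac12$ to a \emph{density} failure.  In fact $C_0^\infty(G)$ is dense in $H^\theta(G)$ for all $\theta\le\tfrac12$; what fails at $\theta=\tfrac12$ is the \emph{norm} identification: $[L^2(G),H_0^1(G)]_{1/2}=H_{00}^{1/2}(G)$ carries a strictly stronger norm than $H^{1/2}(G)$, so the interpolated bound does not control $\|g\|_{1/2,G}$.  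This is precisely the Lions--Magenes subtlety that the paper references via Grisvard.  With that caveat, and provided you treat the interpolation-space identifications $[H^1,L^2]_\theta=H^{1-\theta}$ and $[L^2,H_0^1]_\theta=H^\theta$ ($\theta<\tfrac12$) as cited results rather than consequences of density alone, your proposal gives a correct proof of the lemma.
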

Noteworthy, the case $\theta=1/2$ has to be excluded in view of Grisvard \cite[Proposition 1.4.4.8]{Grisvard:85}. Moreover, \cite{Necas} verified that the extension of $\B$ on $H^s_0\times H^{2-s}_0$ satisfies an inf-sup condition and thus obtained the following theorem.
\begin{theorem}[Weak formulation in $H^s_0$]
\label{T:continuous}
Let $\frac{1}{2}<s<\frac{3}{2}$. For any $f \in H^{s-2}$, there exists a unique weak solution $u\in H^s_0$ of the Dirichlet problem~\eqref{Poisson}:
\[
 \B[u,\vphi] = \scp{f}{\vphi},
\qquad
 \forall\vphi\in C_0^\infty(\Omega).
\]
It satisfies the a~priori estimate
\begin{equation*}
 \norma{u}_{s} \le C \norma{f}_{s-2},
\end{equation*}
where the constant $C$ depends only on $s$ and $\Omega$.
\end{theorem}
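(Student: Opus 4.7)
My plan is to apply the Banach--Ne\v{c}as--Babu\v{s}ka (BNB) theorem to the extension of $\B$ to $H^s_0\times H^{2-s}_0$.  Since the index range $(\tfrac{1}{2},\tfrac{3}{2})$ is invariant under the reflection $s\mapsto 2-s$ and $\B$ is symmetric, once I establish the inf-sup condition for every exponent in this range, the BNB nondegeneracy in the test variable will come for free from the inf-sup at the reflected exponent. The proof therefore reduces to two tasks: a continuous extension of $\B$ and a single inf-sup estimate.

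\emph{Continuity.}  Both $s$ and $2-s$ exceed $\tfrac{1}{2}$, so $C_0^\infty(\Omega)$ is dense in $H^s_0$ and in $H^{2-s}_0$; it therefore suffices to bound $|\B[v,\vphi]|\lesssim \|v\|_s\|\vphi\|_{2-s}$ for $v,\vphi\in C_0^\infty(\Omega)$.  Assuming first $s\le 1$ (the case $s>1$ follows by the symmetry $\B[v,\vphi]=\B[\vphi,v]$ after relabelling), I write $\B[v,\vphi]=\sum_{i=1}^{2}\int_{\Omega}\partial_i v\cdot\partial_i\vphi$ and apply Lemma~\ref{L:Necas-4.1} to each summand with $f=v$, $g=\partial_i\vphi$ and $\theta=1-s\in[0,\tfrac{1}{2})$.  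This yields the per-term bound $\lesssim\|v\|_{s}\|\partial_i\vphi\|_{1-s}$, which the shift property Lemma~\ref{L:basic-properties}(\ref{it:shift}) converts into $\lesssim\|v\|_{s}\|\vphi\|_{2-s}$, and density extends $\B$ to the full product space.

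\emph{Inf-sup.}  The real obstacle is to prove
\[
 \beta := \inf_{v\in H^s_0\setminus\{0\}}\ \sup_{\vphi\in H^{2-s}_0\setminus\{0\}} \frac{\B[v,\vphi]}{\|v\|_s\,\|\vphi\|_{2-s}} > 0.
\]
When $s=1$ this is immediate: take $\vphi=v$ and invoke Friedrichs' inequality (Lemma~\ref{L:Poincare}).  For general $s\in(\tfrac{1}{2},\tfrac{3}{2})$ I would follow Ne\v{c}as~\cite{Necas}: given $v\in H^s_0$, construct a witness $\vphi\in H^{2-s}_0$ by solving an auxiliary Dirichlet problem, leaning on the assertion that $-\Delta$ is an isomorphism $H^s_0\to H^{s-2}$ on the Lipschitz polygon $\Omega$ precisely in this range.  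Establishing this isomorphism---by harmonic analysis near $\partial\Omega$, or by interpolation between suitable endpoint regularity statements---is the technical heart of the argument, and the sharp cutoff at $s=\tfrac{3}{2}$ reflects singular behaviour at reentrant vertices of the polygon.

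\emph{Conclusion.}  Granted continuity and $\beta(s)>0$ for every $s\in(\tfrac{1}{2},\tfrac{3}{2})$, both BNB conditions hold on $H^s_0\times H^{2-s}_0$: the nondegeneracy in $\vphi$ follows from the inf-sup at the reflected exponent $2-s$ combined with the symmetry of $\B$.  Since $f\in H^{s-2}=(H^{2-s}_0)^{*}$ is a bounded linear functional on $H^{2-s}_0$, the BNB theorem produces a unique $u\in H^s_0$ with $\B[u,\vphi]=\scp{f}{\vphi}$ for all $\vphi\in H^{2-s}_0$, together with $\|u\|_s\le\beta^{-1}\|f\|_{s-2}$; density of $C_0^\infty(\Omega)$ in $H^{2-s}_0$ reduces the test class to $C_0^\infty(\Omega)$ as in the statement.
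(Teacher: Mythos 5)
Your continuity step is sound and is essentially the same mechanism the paper alludes to (Lemma~\ref{L:Necas-4.1} plus the shift property, followed by density), so no concern there. The problem is the inf-sup step, which as written is circular. To build your witness you ``lean on the assertion that $-\Delta$ is an isomorphism $H^s_0\to H^{s-2}$''---but that is precisely the content of Theorem~\ref{T:continuous}, just restated. More concretely: to solve an auxiliary Dirichlet problem with data in $H^{-s}$ and a solution in $H^{2-s}_0$ is to invoke well-posedness at the reflected exponent $2-s$, which lies in the same range $(\tfrac12,\tfrac32)$, so you would be using the theorem (at $2-s$) to establish the inf-sup that proves the theorem (at $s$). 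The BNB machinery cannot bootstrap itself in this way; the inf-sup estimate has to be produced by an independent argument.

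Acknowledging that ``establishing this isomorphism is the technical heart'' does not fill the gap, and the routes you gesture at are themselves nontrivial here. Interpolation between $s=1$ (Lax--Milgram) and an integer endpoint such as $s=2$ does not work on a general Lipschitz polygon: for the L-shaped domain in Example~\ref{E:reentrant corner}, $-\Delta:H^2\cap H^1_0\to L^2$ is not onto, so there is no benign $s=2$ endpoint to interpolate from, and the sharp cutoff at $s=\tfrac32$ is exactly what makes the endpoint choice delicate. What the paper actually does is to \emph{cite} Ne\v{c}as for both the continuous extension of $\B$ (via Lemma~\ref{L:Necas-4.1}) and for the verification of the inf-sup condition on $H^s_0\times H^{2-s}_0$; in Ne\v{c}as's original argument the inf-sup is obtained directly by localization and a reflection/Fourier argument near the boundary, not by presupposing the isomorphism. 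If you want a self-contained proof, that is the argument you would have to reproduce (or replace by an equivalent transposition/regularity argument valid on Lipschitz domains in the full range $\tfrac12<s<\tfrac32$); as it stands, the pivotal estimate of your plan is assumed rather than proved.
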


Theorem \ref{T:continuous} may be used to establish the well-posedness of Problem \eqref{Problem} in suitable fractional Sobolev spaces.

\begin{corollary}[Well-posedness for point sources]
\label{C:Problem}
Problem	\eqref{Problem} has a unique solution $u$ which, for every $\theta>0$, satisfies $u\in H^{1-\theta}_0$ and
\begin{equation*}
 \B[u,\phi] = \sum_{j=1}^N \alpha_j \phi(x_j),
\qquad
 \forall \phi \in H_0^{1+\theta}.
\end{equation*}
\end{corollary}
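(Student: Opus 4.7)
The plan is to reduce Corollary~\ref{C:Problem} to Theorem~\ref{T:continuous} by first identifying $f=\sum_{j=1}^N \alpha_j \delta_{x_j}$ as an element of the correct negative-order Sobolev space, and then extending the resulting variational identity from test functions in $C^\infty_0(\Omega)$ to test functions in $H^{1+\theta}_0$ by a density argument.

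The first step is to observe that, since $d=2$, we have $1+\theta > d/2$ whenever $\theta>0$. Lemma~\ref{L:basic-properties}(\ref{it:sobolev-embedding}) then yields a continuous embedding $H^{1+\theta}(\Omega) \hookrightarrow C^0(\ol\Omega)$, so each point evaluation $\varphi \mapsto \varphi(x_j)$ is a bounded linear functional on $H^{1+\theta}_0$. Consequently $f \in H^{-(1+\theta)}$ with $\norma{f}_{-(1+\theta)} \lesssim \sum_{j=1}^N |\alpha_j|$.

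For the principal range $\theta\in(0,\tfrac12)$ I would set $s=1-\theta$, so that $s\in(\tfrac12,1)$ and $2-s=1+\theta$. Theorem~\ref{T:continuous} then furnishes a unique $u\in H^s_0=H^{1-\theta}_0$ with $\B[u,\varphi]=\sum_j \alpha_j \varphi(x_j)$ for every $\varphi\in C_0^\infty(\Omega)$. By the Ne\v cas extension (via Lemma~\ref{L:Necas-4.1}), $\B$ is continuous on $H^{1-\theta}_0\times H^{1+\theta}_0$, while the right-hand side is continuous on $H^{1+\theta}_0$ by the embedding of the previous paragraph. Since $C_0^\infty(\Omega)$ is dense in $H^{1+\theta}_0$ by definition, the identity extends to every $\phi\in H^{1+\theta}_0$. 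To treat $\theta\ge \tfrac12$, I would pick any $\theta_0\in(0,\tfrac12)$, apply the previous step, and transfer the conclusion using the continuous inclusions $H^{1-\theta_0}_0 \hookrightarrow H^{1-\theta}_0$ and $H^{1+\theta}_0 \hookrightarrow H^{1+\theta_0}_0$. Uniqueness descends directly from Theorem~\ref{T:continuous}.

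The argument is essentially a density-and-embedding exercise, so I do not expect serious technical difficulty; the only point deserving care is consistency of the Ne\v cas extensions of $\B$ used at different values of $s$, which is immediate as all of them agree with $\int_\Omega \nabla v\cdot\nabla\varphi$ on $C_0^\infty\times C_0^\infty$. The conceptual heart of the proof is that the Dirac masses are tamed precisely because the test space $H^{1+\theta}_0$ lies strictly above the Sobolev critical exponent for continuity in two dimensions; equivalently, the weaker regularity $H^{1-\theta}_0$ of the solution is exactly what is needed so that the dual space sees point evaluation as a bounded operation.
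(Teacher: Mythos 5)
Your argument is correct and follows exactly the route the paper intends (the paper states the corollary as a direct consequence of Theorem~\ref{T:continuous} without spelling out a proof): the Sobolev embedding $H^{1+\theta}\hookrightarrow C^0(\ol\Omega)$ for $\theta>0$ places the Dirac combination in $H^{-1-\theta}$, Theorem~\ref{T:continuous} with $s=1-\theta\in(\tfrac12,1)$ yields existence and uniqueness, and density of $C_0^\infty(\Omega)$ in $H_0^{1+\theta}$ together with the Ne\v{c}as continuity of $\B$ extends the test-function class; the inclusion argument for $\theta\ge\tfrac12$ and the observation that all admissible $\theta_0\in(0,\tfrac12)$ produce the same $u$ complete the picture.
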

%
%
Let us fix $0 < \theta < \frac{1}{2}$.  In view of the extension of the bilinear form $\B$ to $H^{1-\theta}\times H^{1+\theta}$ and Corollary \ref{C:Problem},  we refer to
\begin{equation}
\label{Primal}
\begin{aligned}
 \text{given }f \in H^{-1-\theta},
 &\text{ find $u \in H_0^{1-\theta}$ such that }
\\
 \B[u,\phi] &=\scp{f}{\phi}_{-1-\theta , 1+\theta},
\quad
 \forall \phi \in H^{1+\theta}_{0}(\Omega)
\end{aligned}
\end{equation}
as the direct (primal) problem and to
\begin{equation}
\label{Dual}
\begin{aligned}
 \text{given }g \in H^{-1+\theta},
 &\text{ find $w \in H_0^{1+\theta}$ such that}
\\
 \B[\varphi , w] &= \scp{\varphi}{g}_{1-\theta , -1+\theta},
\quad
 \forall \varphi \in H^{1-\theta}_{0}(\Omega),
\end{aligned}
\end{equation}
as the adjoint (dual) problem.

\subsection{Finite element discretization}
%
%
Let $\T$ be a conforming (edge-to-edge) triangulation of $\Omega$.  We refer to the minimum angle appearing in $\T$ as shape coefficient $\sigma_\T$.   Moreover, we denote by $\E$ the set of all its edges and by $\V$ the set of all its vertices.  The star around a vertex $z \in \V$ is given by
\[
 \omega_z
 :=
 \bigcup_{ T \in \T :  z  \in T } T.
\]

Given $\ell \in \mathbb N$, we let $\mathcal{P}^{\ell}$ denote the space of polynomials of total degree $\le\ell$.  Moreover, let $\VT $ be the finite element space of continuous piecewise polynomials that vanish at the boundary, i.e.
\[
 \VT
 =
 \VT^\ell
 =
 \{ V \in C(\overline{\Omega}) :
     V=0 \text{ on } \partial \Omega; \;
     V|_T \in \mathcal{P}^{\ell} , \forall T \in \T
 \}.
\]
The set of the standard nodes (the locations of the degrees of freedom) of $\VT$ is indicated by $\N$.  We thus have $\V\cap\Omega\subset\N$, with equality for $\ell=1$.

The approximate solution $U$ of \eqref{Problem} is defined as the Galerkin solution in $\VT$:
\begin{equation}\label{galerkin}
   U \in \VT : \quad \B[ U , V] = \sum_{j=1}^N \alpha_j V(x_j) 
\quad
  \forall V \in \VT.
\end{equation}
Notice that, although Problem \eqref{Problem} is associated with a non-symmetric weak formulation, $U$ is computed by solving the usual symmetric, positive definite linear system.


%
%
%
\section{A~posteriori error analysis}
\label{S:apost-analysis}
%
%
In this section, we derive a posteriori bounds for the error $|u-U|_{1-\theta}$, where $0<\theta<1/2$ and $|\cdot|_{1-\theta}$ is a norm thanks to Lemma \ref{L:Poincare}.  Before embarking on their derivation, we define estimator and data oscillation.

\subsection{Estimator and data oscillation}
%
%
We start by defining the error estimator.  To this end, we denote by $h_T=|T|^{\frac12}$ the local meshsize and let $E\in\E$ be an edge.  If $E$ is an interelement edge, we write $E=T_1 \cap T_2$ with $T_1$, $T_2\in\T$ and define the jump 
$ \llbracket \nabla U \rrbracket_{|E} $ of the flux by
\[
 \llbracket \nabla U \rrbracket_{|E}
 :=
 \nabla U^1 \cdot n^1 + \nabla U^2 \cdot n^2,
\]
where $ U^1 $, $ U^2 $ denote the restrictions of $U$ to $ T_1 $, $ T_2 $, respectively, and $ n^1$, $ n^2 $ are the outer normals of $ T_1 $, $ T_2 $.  If $E$ is a boundary edge, we have $E\subset\partial\Omega$ and set $ \llbracket \nabla U \rrbracket_{|E} := 0$.  With these notations,
we define estimator and indicators by
\begin{equation}
\label{est}
 \eta_\theta^2
 =
 \sum_{T\in\T} \eta_{T , \theta}^2
\quad\text{and}\quad
\eta_{T , \theta}^2
=
h_T^{2+2\theta} \norma{\Delta U}_{L^2 (T )}^2
+
h_T^{1+2\theta}
\norma{\llbracket \nabla U\rrbracket}_{L^2 (\partial T)}^2.
\end{equation}
Notice that $\eta_{T,\theta}$ depends only on the approximate solution $U$ and the local meshsize $h_T$ and, thus, is independent of the point sources in Problem \eqref{Problem}. 

\medskip The oscillation is tailored to the specific class of source term in Problem \eqref{Problem}.  It is defined starwise and depends on the interplay of the boundary, the nodes and the supports of the point sources.  We write
\[
 \hat{\mathcal{N}}_\ell=\mathcal{N}_\ell \cup \partial \Omega
\]
for short. For each vertex $z \in \V$, we consider only point sources whose supports are not nodes and collect them according to their sign:
\begin{equation*}
 A_z^{+}
 :=
 \{ j : x_j \in \omega_z \setminus  \mathcal{N}_{\ell} 
\text{ and } \alpha_j >0 \},
\quad
 A_z^{-}
 :=
 \{ j : x_j \in \omega_z \setminus \mathcal{N}_{\ell} \text{ and } \alpha_j <0 \}.
\end{equation*}
  Given $j\in A_z^{+}$, we set
\begin{equation*}
 \sigma_{z,j}^{+}
 :=
 \min \bigg\{
  \dist(x_j, \hat{\mathcal{N}}_\ell)^{\theta}
   +  \max_{i \in A_z^{-}} \dist(x_i, \hat{\mathcal{N}}_\ell)^{\theta} \, ,
 \, \max_{i \in A_z^{-}} |x_j - x_i|^{\theta}
\bigg\} , 
\end{equation*}
if $A_z^{-}\neq \emptyset$, and $\sigma_{z,j}^{+}=0$ otherwise. Similarly, given $j \in A_z^{-} $
\begin{equation*}
 \sigma_{z,j}^{-}
 :=
 \min \bigg\{
  \dist(x_j, \hat{\mathcal{N}}_\ell)^{\theta}
   +  \max_{i \in A_z^{+}} \dist(x_i, \hat{\mathcal{N}}_\ell)^{\theta} \, ,
   \, \max_{i \in A_z^{+}} |x_j - x_i|^{\theta}
   \bigg\},
\end{equation*}
if $A_z^{+}\neq \emptyset$, and $\sigma_{z,j}^{-}=0$ otherwise.  Moreover, let $\lambda_z$ denote the piecewise affine function that is 1 at $z$ and $0$ in all other vertices of the star $\omega_z$.  The oscillation indicator associated to $z$ is
\[
 \xi_{\theta}(z)
 :=
 \begin{cases}
  \sum_{j=1}^{N}
   \dist(x_j, \hat{\mathcal{N}}_{\ell})^{\theta} |\alpha_j| \lambda_z(x_j)
  &\text{if } z\in \partial \Omega,
 \\ 
  \min \big\{
   \sum_{j\in A_z^+} \sigma_{z,j}^+ |\alpha_j|\lambda_z(x_j),
   \sum_{j\in A_z^-} \sigma_{z,j}^- |\alpha_j| \lambda_z(x_j)
   \big\}
& \text{if } z\in \Omega,
\end{cases}
\]
with the convention $\sum_\emptyset=0$. The global oscillation term is then
\begin{equation}
\label{osc}
 \xi_\theta
 := \bigg(
  \sum_{z \in \V} \xi_{\theta}^2 (z)
 \bigg)^{\frac{1}{2}}.
\end{equation}
Let us conclude this section with two remarks, which highlight useful properties of this oscillation.

\begin{remark}[Scaling of oscillation]
\label{R:osc-scaling}
The error estimator $\eta_\theta$ involves, as scaling factors, suitable powers of the local meshsizes $h_T$, $T\in\T$.  For example, the $L^2$-norm of element residual $\Delta U$ is scaled by $h_T^{1+\theta}$.  In order to derive a bound for $\xi_\theta$ with similar scaling factors, we observe that, for any vertex $z$ of a triangle $T\in\T$, we have
\[
 \sigma_{z,j}^{\pm} \lesssim h_T^\theta,
\]
where the hidden constant depends on the shape coefficient $\sigma_\T$.  Since the triangles of the patch $\omega_T = \bigcup\{T'\in\T : T'\cap T \neq \emptyset\}$ have similar areas, this readily leads to
\begin{equation}
\label{alt_osc}
 \xi_{\theta}
 \lesssim
 \sum_{T\in \T} h_T^{\theta} \left(
  \sum_{x_j\in \omega_{T}}|\alpha_j|\Upsilon(T)
 \right)
\end{equation}
with
\[
 \Upsilon(T)
 =
 \begin{cases}
	0
	&\text{if } T\cap \partial\Omega=\emptyset \ \text{and}\  \alpha_i \alpha_j >0,\ \forall x_i,x_j\in \omega_T,
	\\
	1 &\text{otherwise}.
 \end{cases}
\]
The scaling factor of the oscillation is thus given by $h_T^\theta$.  In the case of quasi-uniform refinement,  the global counterpart of this scaling factor corresponds to the decay rate of the error $|u-U|_{1-\theta}$ under consideration; see \cite{Scott}.  The bound in \eqref{alt_osc} may be also used as a triangle-indexed alternative for \eqref{osc}, which however overestimates whenever the distances between point sources are much smaller that the local meshsize.
\end{remark}

\begin{remark}[Oscillation and refinement]
\label{R:osc-ref}
The local oscillation $\xi_\theta(z)$, $z\in\V$, vanishes in many cases and, in any event, asymptotically.  To see this, we observe that $\xi_{\theta}(z) \neq 0$ implies, according to the location of $z$, one of the following conditions:
\begin{enumerate}[\ $\bullet$]
\item If $z\in\partial\Omega$ is a boundary vertex, there is a point source located in $\omega_z\setminus\hat{\mathcal{N}}_{\ell}$.
\item If $z\in\Omega$ is a interior vertex, there are point sources with different sign located in $\omega_z\setminus\hat{\mathcal{N}}_{\ell}$.
\end{enumerate}
Remarkably, if one of these condition is verified, a finite number of refinements step ensures that its negation is met and this remains so for further refinements.  Hence, $\xi_\theta=0$ can always be reached after a finite number of suitable refinement steps.
\end{remark}



%
%
\subsection{Upper Bound}
\label{S:upper-bound}
The estimator \eqref{est} and the oscillation \eqref{osc} provide an upper bound for the error in $H^{1-\theta}$.

\begin{theorem}[Upper bound]
\label{T:upper-bound}
Let $u$ be the solution of Problem \eqref{Problem} and $U$ its approximation associated with the triangulation $\T$. There exists a constant $C_U$, depending on $\Omega$, $\theta\in (0,\frac{1}{2})$ and the shape coefficient $\sigma_\T$ of $\T$, such that
\[
 \left|{u-U}\right|_{1-\theta}
 \le
 C_U ( \eta_\theta + \xi_\theta ).
\]
\end{theorem}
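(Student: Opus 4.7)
The plan is to bound $|u-U|_{1-\theta}$ by duality, then split the residual into the standard volume/edge part (controlled by $\eta_\theta$) and a point-source part (controlled by $\xi_\theta$).

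First, by the duality formula \eqref{norm-duality}, it suffices to bound $\scp{u-U}{g}$ by $(\eta_\theta+\xi_\theta)\norma{g}_{-1+\theta,\Omega}$ for every $g\in H^{-1+\theta}$. Given such a $g$, let $w\in H_0^{1+\theta}$ solve the dual problem \eqref{Dual}; Theorem~\ref{T:continuous} yields $|w|_{1+\theta}\lesssim \|g\|_{-1+\theta}$. By Galerkin orthogonality (i.e.\ the compatibility of \eqref{Primal} and \eqref{galerkin}), $\scp{u-U}{g} = \B[u-U, w-W]$ for every $W\in\VT$. I will choose $W$ to be a Scott--Zhang-type quasi-interpolant that is Lagrange-like: it vanishes on $\partial\Omega$ and reproduces the Lagrange nodal values of $w$ at the nodes of $\N$ (this is possible since, for $\theta>0$, $w\in H^{1+\theta}\hookrightarrow C^0(\overline{\Omega})$ by Lemma~\ref{L:basic-properties}(iii)).

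Next, expand both sides. Since $u$ solves \eqref{Problem} against $w-W\in H_0^{1+\theta}$, Corollary~\ref{C:Problem} gives $\B[u,w-W]=\sum_j \alpha_j(w-W)(x_j)$. Elementwise integration by parts of $\B[U,w-W]$ produces
\[
 \B[u-U, w-W]
 = \sum_{T\in\T} \int_T \Delta U\,(w-W)
   - \sum_{E\in\E} \int_E \llbracket\nabla U\rrbracket(w-W)
   + \sum_{j=1}^N \alpha_j (w-W)(x_j).
\]
The first two sums are the standard residual. Using Lemma~\ref{L:scaling} to pass to the reference triangle and the interpolation properties of the quasi-interpolant in fractional norms, one has $\|w-W\|_{0,T}\lesssim h_T^{1+\theta}|w|_{1+\theta,\omega_T}$ and $\|w-W\|_{0,E}\lesssim h_T^{1/2+\theta}|w|_{1+\theta,\omega_T}$. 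Cauchy--Schwarz, the definition of $\eta_{T,\theta}$ in \eqref{est}, and finite overlap of patches give the bound $\lesssim \eta_\theta\,|w|_{1+\theta}$ for the residual part.

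The main obstacle is the point-source sum $\sum_j \alpha_j (w-W)(x_j)$, which must be bounded by $\xi_\theta|w|_{1+\theta}$. I will localize it via the partition of unity $\{\lambda_z\}_{z\in\V}$, writing $\sum_j \alpha_j(w-W)(x_j) = \sum_{z\in\V}\sum_j \alpha_j\lambda_z(x_j)(w-W)(x_j)$, and drop all terms with $x_j\in\N$ (these vanish since $W$ is Lagrange at nodes). For each remaining star I use the embedding $H^{1+\theta}\hookrightarrow C^{0,\theta}$, which gives $|(w-W)(x_j)|\le [w]_{C^{0,\theta}(T)}\,\dist(x_j,\hat\N)^\theta$ whenever $W(y)=w(y)$ for a nearby $y\in\hat\N$; this yields the boundary-vertex branch of $\xi_\theta(z)$. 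For an interior vertex $z$, I exploit the freedom of modifying $W$ by $c\lambda_z$ (which does not affect the other constraints): this lets me subtract a common constant from $(w-W)(x_j)$ for $x_j$ in the star, so that when $A_z^+$ or $A_z^-$ is empty the whole star-contribution cancels, recovering the $\min\{\cdot,\cdot\}$ structure in the interior branch. The two distance choices inside $\sigma_{z,j}^\pm$ correspond to two admissible ways of bounding $w(x_j)-w(x_i)$ for a pair of opposite-sign sources: either via the triangle inequality through a node, or directly through $|x_j-x_i|^\theta$.

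Finally I assemble: the star-bounds sum to $\xi_\theta\,[w]_{C^{0,\theta}}\lesssim \xi_\theta\,|w|_{1+\theta}$ by a standard overlap argument, so combining with the residual bound and the a~priori estimate $|w|_{1+\theta}\lesssim \|g\|_{-1+\theta}$ and taking the supremum over $g$ gives $|u-U|_{1-\theta}\le C_U(\eta_\theta+\xi_\theta)$. The delicate point I expect to require the most care is the design of $W$ and the bookkeeping that matches the star-by-star cancellation to the precise form of $\sigma_{z,j}^\pm$ in the definition of $\xi_\theta(z)$.
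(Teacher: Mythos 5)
Your proposal follows essentially the same strategy as the paper: duality to reduce to bounding the residual tested against a dual solution $w$, Galerkin orthogonality to subtract an interpolant, integration by parts to produce element/edge residuals plus a point-source sum, a partition of unity to localize, and per-star constant shifts $c_z$ chosen as sign-weighted averages to generate the $\min$ structure and the two distance-based terms inside $\sigma_{z,j}^{\pm}$. The interpolant you describe --- vanishing on $\partial\Omega$ and reproducing the nodal values of $w$ --- is simply the Lagrange interpolant (well defined since $H^{1+\theta}\hookrightarrow C^0$ for $\theta>0$), which is exactly what the paper uses.

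Two points deserve more care than your sketch gives them. First, the phrase ``modifying $W$ by $c\lambda_z$'' is imprecise as stated: a single $W\in\VT$ must serve in \emph{both} the volume/edge part and the point-source part, and changing it per star affects both. The paper resolves this by localizing the residual itself, writing $\scp{R}{w}=\sum_z\scp{R}{(\tw-c_z)\lambda_z}$ with $\tw=w-I_\T w$; this is legitimate because $\scp{R}{\lambda_z}=0$ for interior $z$, and it automatically places the \emph{same} $c_z$ in the $\B$-part and the point-source part of the $z$-contribution. It then needs to (and does) check $\min_{\omega_z}\tw\le c_z\le\max_{\omega_z}\tw$ so that the interpolation bounds of Lemma~\ref{L:interp} apply to $\tw-c_z$. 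If one instead shifts only the point-source sum, the leftover term $c_z\B[U,\lambda_z]$ has to be reabsorbed --- it can be, after another integration by parts, but the paper's formulation avoids this extra bookkeeping entirely. Second, the quantity you control should be the H\"older seminorm of the \emph{interpolation error} $\tw$, not of $w$; the local bound $|\tw(x)-\tw(y)|\lesssim|x-y|^{\theta}|w|_{1+\theta,\omega_z}$ is not a direct consequence of the Sobolev embedding, but of the fractional Bramble--Hilbert inequality (Lemma~\ref{L:Poincare-bis}) applied after pulling back to the reference triangle, which is precisely the paper's Lemma~\ref{L:interp}. With these two details made precise, your argument matches the paper's proof.
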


It is worth observing that this upper bound will simplify under adaptive refinement.

\begin{remark}[Asymptotic form of upper bound]
\label{R:asym-ubd}
Point sources generate singularities which are centered at their supports.  Since an adaptive algorithm will refine around these places, Remark \ref{R:osc-ref} suggests that, after a finite number of adaptive refinements, the upper bound of Theorem \ref{T:upper-bound} becomes
\[
 \left|{u-U}\right|_{1-\theta}
 \le
 C_U \eta_\theta.
\]
This expectation is in line with our numerical experiments in \S\ref{S:experiments}.  Interestingly, the asymptotic form is independent of the point sources in Problem \eqref{Problem}.
\end{remark}

We now prove Theorem \ref{T:upper-bound}, postponing some technical estimates about the interpolation error to Lemma \ref{L:interp} below. 

\begin{proof}[Proof of Theorem \ref{T:upper-bound}]
We split the proof in several steps.

\step{1}  We start by relating the error with a suitable norm of the residual.  To this end, we use a duality argument relying on
\begin{equation*}
 |u-U|_{1-\theta}
 =
 \sup_{g \in H^{-1+\theta}}
  \dfrac{\scp{u-U}{g}}{\norma{g}_{-1+\theta}},
\end{equation*}
which is a special case of \eqref{norm-duality}.  Let $g \in H^{-1+\theta}$ and denote by $w$ the solution to the dual problem \eqref{Dual}.  Theorem~\ref{T:continuous} and the original symmetry of $\B$ in \eqref{org-B} yield that $w \in H^{1+\theta}_0$ with $\left | w \right |_{1+\theta} \le C_{\theta,\Omega}^* \left\| g \right\|_{-1+\theta} $.  Since
\begin{equation*}
 \scp{\e }{ g }
 =
 \B[ \e , w ]
 =
 \scp{R}{w}
 :=
 \sum_{j=1}^N \alpha_j w(x_j) - \B[U,w],
\end{equation*}
we obtain
\begin{equation}
\label{error<}
 |u-U|_{1-\theta}
 \le
 C_{\theta,\Omega}^* \| R \|_{-1-\theta}.
\end{equation}
It thus remains to bound the residual norm $\| R \|_{-1-\theta}$.

\step{2} We rewrite $\scp{R}{w}$ for a fixed $w\in H^{1+\theta}$ by means of the partition of unity $\sum_{z\in\V} \lambda_z = 1$ and by exploiting $\scp{R}{\lambda_z}=0$ for all interior vertices $z\in\V\cap\Omega$.  To this end, we let $I_{\T}w$ denote the Lagrange interpolant of $w$ onto $\VT^\ell$ and set $\tw := w - I_{\T} w$.  Moreover, if $z\in\V\cap\Omega$ is an interior vertex, we let $c_z\in\R$ to be chosen later, while, if $z\in\V\cap\Omega$ is boundary vertex, we set $c_z := 0$.  Then
\begin{equation}
\label{R<}
  \scp{R}{w}
  =
  \scp{R}{w - I_{\T} w}
  =
  \sum_{z\in\V} \scp{R}{(\tw-c_z)\lambda_z}
\end{equation}
with the local contributions
\begin{equation}
\label{loc-residual}
 \scp{R}{(\tw-c_z)\lambda_z}
 =
 \sum_{j\in A_z} \alpha_j \big[ \tw(x_j)-c_z \big]\lambda_z(x_j)
 \; - \;
 \B[U,(\tw-c_z)\lambda_z]
\end{equation}
and $A_z=A_z^+ \cup A_z^-$.

\step{3} Fix any $z\in\V$.  Assuming that $c_z = \tw(x^z)$ for some $x^z\in\omega_z$, we bound the second term in \eqref{loc-residual} as follows:
\[
 |\B[  U , (\tw-c_z)\lambda_z]|
 \lesssim
 \bigg(
  \sum_{T \in \Tz}
   h_T^{1+\theta} \norma{\Delta U}_{0,T}
  +
  \sum_{E \in \Ez}
   h_E^{\frac12+\theta} \norma{\llbracket\grad U\rrbracket}_{0,E}
 \bigg) \left|w\right|_{1+\theta,\omega_z},
\]
where $h_E$ indicates the length of an edge $E$ and $\Tz := \{ T \in \T : z\in T \}$ and  $\Ez := \{ E \in \E : z \in E\}$ stand for the triangles and interior edges of the star $\omega_z$, respectively.  To this end, we integrate by parts on each $T\in\Tz$, use $\lambda_z\le1$ and obtain
\begin{equation*}
 |\B[  U , (\tw-c_z)\lambda_z]|
 \le
 \sum_{T \in \Tz} \norma{\Delta U}_{0,T}
  \norma{\tw-c_z}_{0,T}
 +
 \sum_{E \in \Ez} \norma{\llbracket\grad U\rrbracket}_{0,E}
  \norma{\tw-c_z}_{0,E}.
\end{equation*}
Adopting standard arguments to the setting at hand, see Lemma~\ref{L:interp} below, yields
\[
 \left\| \tw - c_z \right\|_{0,T}
 \lesssim
 h_T^{1+\theta} |w|_{1+\theta,\omega_z}
\quad\text{and}\quad
 \left\| \tw - c_z \right\|_{0,E}
 \lesssim
  h_E^{\frac12 +\theta} |w|_{1+\theta,\omega_z},
\]
where the hidden constants depend on $\theta$ and the shape coefficient $\sigma_\T$.  Inserting this in the preceding inequality, we arrive at the claimed bound for $|\B[U,(\tw-c_z)\lambda_z]|$.
 
\step{4} Let $z\in\V$.  For given or appropriately chosen $c_z$, we derive the following bound for the sum over $A_z$ in \eqref{loc-residual}:
\begin{equation}
\label{xi-z}
 \bigg|
  \sum_{j \in A_z} \alpha_j \big[\tw(x_j)-c_z \big] \lambda_z(x_j)
 \bigg|
 \lesssim
 \xi_\theta(z) \left|w\right|_{1+\theta,\omega_z}.
\end{equation}
The possible choices of $c_z$ depend on the location of $z$ as well as the point sources located in $\omega_z$.

\smallskip\noindent\emph{Case 1:} $z\in\Omega$ and $A_z=\emptyset$.
Then $\sum_{j\in A_z} \alpha_j \big[ \tw(x_j)-c_z \big] \lambda_z(x_j) = 0$, irrespective of the choice of $c_z$.  In particular, \eqref{xi-z} is verified and we may take $c_z = 0$. 

\smallskip\noindent\emph{Case 2:} $z\in\Omega$ and $A_z\neq\emptyset$.  Then we have $A_z^-\neq\emptyset$ or $A_z^+\neq\emptyset$.  If the latter occurs, then $\sum_{k \in A_z^+} \alpha_k  \lambda_z(x_k)>0$ and we can consider
\begin{equation}
\label{cz+}
 c_z^+
 =
 \sum_{j \in A_z^+} \beta^+_j \tw(x_j)
\quad\text{with}\quad
 \beta^+_j
 :=
 \frac{\alpha_j \lambda_z(x_j)}
  {\sum_{k \in A_z^+} \alpha_k  \lambda_z(x_k)} \in (0,1],
\end{equation}
which implies
\begin{equation}
\label{Az;-}
 \sum_{j \in A_z} \alpha_j \big[ \tw(x_j)-c_z^+ \big] \lambda_z(x_j)
 =
 \sum_{j \in A_z^-}
  \alpha_j \big[ \tw(x_j)-c_z^+ \big] \lambda_z(x_j).
\end{equation}
Fix $j \in A_z^-$ for a moment.  On the one hand, the definition of $c_z^+$ and Lemma~\ref{L:interp} below yield 
\begin{equation}
\label{sigma-1}
 | \tw(x_j) - c_z^+ |  
 \le
 \sum_{i \in A_z^+} \beta^+_i | \tw(x_j) - \tw(x_i) |
 \lesssim
 \max_{i \in A_z^{+}} |x_j - x_i|^{\theta}
  | w |_{1+\theta,\omega_z}.
\end{equation}
On the other hand, since $\tw(\nu)=0 $ for all $\nu \in \hat{\mathcal{N}}_\ell$, we have that, for any choice of $\nu_i \in \hat{\mathcal{N}}_\ell$, $i \in \{0\}\cup A_z^+$,
\[
 |\tw(x_j) - c_z^+ |
 \le
 |\tw(x_j) - \tw(\nu_0) |
 +
 \sum_{i \in A_z^+} \beta^+_i | \tw(x_i) - \tw(\nu_i)|,
\]
which again by Lemma~\ref{L:interp} implies that
\begin{equation}\label{sigma-2}
|\tw(x_j) - c_z^+ |
\lesssim \left( \dist(x_j, \hat{\mathcal{N}}_\ell)^{\theta} + \max_{i \in A_z^{+}}
\dist(x_i, \hat{\mathcal{N}}_\ell)^{\theta} \right) | w |_{1+\theta,\omega_z} .
\end{equation}
Combining the bounds \eqref{sigma-1} and \eqref{sigma-2} with the definition of $\sigma_j^-$ gives
\[
 |\tw(x_j) - c_z^+|
 \lesssim
 \sigma_j^-| w |_{1+\theta,\omega_z},
\quad \text{for all } j \in A_z^-,
\]
and thus, upon recalling \eqref{Az;-},
\begin{equation}\label{xi-1}
 \bigg|
  \sum_{j \in A_z} \alpha_j \big[ \tw(x_j)-c_z^+ \big] \lambda_z(x_j)
 \bigg|
 \lesssim
 \sum_{j \in A_z^-}
  |\alpha_j| \sigma_{j}^- \lambda_z (x_j)
  |w|_{1+\theta,\omega_z}.
\end{equation}

Moreover, if we have $A_z^-\neq\emptyset$, we can consider
\[
 c_z^-
 =
 \sum_{j \in A_z^-} \beta^-_j \tw(x_j)
\quad\text{with}\quad
 \beta^-_j
 :=
 \frac{ -\alpha_j \lambda_z(x_j)}
  {\sum_{k \in A_z^-} (-\alpha_k)  \lambda_z(x_k)}
 > 0
\]
Notice that the definition of $c_z^-$ is the one of $c_z^+$, if we replace $A_z^-$ by $A_z^+$.  Consequently, we can argue as before and obtain here
\begin{equation}
\label{xi-2}
 \bigg|
  \sum_{j \in A_z} \alpha_j \big[ \tw(x_j)-c_z^- \big] \lambda_z(x_j)
 \bigg|
 \lesssim
 \sum_{j \in A_z^+}
  |\alpha_j| \sigma_{j}^+ \lambda_z (x_j)
  \left|w\right|_{1+\theta,\omega_z}.
\end{equation}
Taking the minimum of the two bounds \eqref{xi-1} and \eqref{xi-2} verifies \eqref{xi-z} in this case.

\smallskip\noindent\emph{Case 3:} $z\in\partial\Omega$.  Here we have $c_z = 0$. Using Lemma~\ref{L:interp} and $\tw(x) = 0$ for all $x \in \hat{\mathcal{N}}_\ell$, we derive
\begin{align*}
 \sum_{j \in A_z} \alpha_j \big[ \tw(x_j)-c_z \big] \lambda_z(x_j) 
 &=
 \sum_{j \in A_z} \alpha_j \tw(x_j)\lambda_z (x_j)
\\
 &\lesssim   
 \left(
  \sum_{j \in A_z} |\alpha_j|
   \dist(x_j,\hat{\mathcal{N}}_\ell)^\theta
 \right) 
 |w|_{1+\theta,\omega_z},
\end{align*}
which  verifies \eqref{xi-z} also in this case.

\smallskip\noindent Notice that all choices of $c_z$ in Cases 1--3 satisfy $\min_{\omega_z} \tw \leq c_z \leq \max_{\omega_z} \tw $.  Since $\tw$ is continuous and $\omega_z$ compact, we can always choose $x^z\in\omega_z$ such that $\tw(x^z)=c_z$ and therefore apply Step 3. 

\step{5} Combining Steps 3 and 4 and using $h_E\lesssim h_T$ whenever $E\subset T$, we derive
\begin{equation*}
 \scp{R}{(\tw-c_z)\lambda_z}
 \lesssim
 \left[
  \sum_{T \in \Tz} \eta_{T,\theta}
  +
  \xi_\theta(z)
 \right] |w|_{1+\theta,\omega_z}.
\end{equation*}
We insert this inequality into \eqref{R<}, use the Cauchy--Schwarz inequality for sums twice, observe that the cardinality of $\Tz$ is bounded in terms of $\sigma_\T$ and arrive at
\begin{equation*}
 \scp{R}{w}
 \lesssim
 \left(
  \eta_\theta^2 + \xi_\theta^2
 \right)^{1/2}
 \left(
  \sum_{z \in \V} |w|_{1+\theta,\omega_z}^2
 \right)^{1/2}.
\end{equation*}
Subsequently, $\sum_{z \in \V} \left|w\right|_{1+\theta,\omega_z}^2 \le 3 |w|_{1+\theta,\Omega}^2$ and \eqref{error<} finish the proof.
\end{proof}

We turn to the postponed estimates about the interpolation error.

\begin{lemma}[Interpolation error]
\label{L:interp}
Let $ w \in H^{1+\theta} $ with $0 < \theta <\frac12$ and consider $\tw := w - I_{\T} w$, where $I_{\T} w$ denotes the Lagrange interpolant of $w$ into $\VT^\ell$. Moreover, let $\omega_z$, $z\in\V$, be any star of $\T$.  Given any $x,y\in \omega_z$, we have
\[
|\tw(x)-\tw(y)|
\lesssim
|x - y|^{\theta} |w|_{1+\theta,\omega_z},
\]
and, if $c_z = \tw(x^z)$ for some $x^z \in \omega_z$, then
\[
  \left\| \tw - c_z \right\|_{0,T}
  \lesssim h_T^{1+\theta} |w|_{1+\theta,\omega_z},
\quad
 \left\| \tw - c_z \right\|_{0,E}
 \lesssim
 h_E^{\frac12 +\theta} |w|_{1+\theta,\omega_z}
\]
for any triangle $T\in\T$ and any edge $E\in\E$ containing $z$.  The hidden constants depend only on $\Omega$, $\theta$, $\ell$ and $\sigma_\T$, while $h_E$ stands for the length of $E$.
\end{lemma}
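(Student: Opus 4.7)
The plan is to derive the two $L^2$-bounds from the H\"older-type estimate
\begin{equation*}
 |\tw|_{C^{0,\theta}(\omega_z)}
 \lesssim |w|_{1+\theta,\omega_z},
\end{equation*}
which is essentially the content of the first inequality. Indeed, since $c_z=\tw(x^z)$ for some $x^z\in\omega_z$ and $|x-x^z|\lesssim h_T$ by shape regularity whenever $x\in T\subset\omega_z$,
\begin{equation*}
 \|\tw-c_z\|_{0,T}^2
 \le |\tw|_{C^{0,\theta}(\omega_z)}^2\int_T|x-x^z|^{2\theta}\,dx
 \lesssim h_T^{2\theta}\,|T|\,|w|_{1+\theta,\omega_z}^2
 = h_T^{2+2\theta}|w|_{1+\theta,\omega_z}^2,
\end{equation*}
and a one-dimensional integration over $E$ yields the edge bound in the same fashion.

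To prove the H\"older estimate I would pass to a reference star via an affine dilation $F(\hat x)=h_z\hat x+b_z$ mapping a unit-sized star $\hat\omega$ onto $\omega_z$. Shape regularity caps the number of triangles meeting at $z$, so $\hat\omega$ belongs to a finite list of combinatorial types determined by $\sigma_\T$. Setting $\hat w:=w\circ F$, Lemma~\ref{L:scaling}(i) gives $|\hat w|_{1+\theta,\hat\omega}\simeq h_z^\theta|w|_{1+\theta,\omega_z}$, while a direct computation yields the analogous $|\tw|_{C^{0,\theta}(\omega_z)}\simeq h_z^{-\theta}|\hat{\tw}|_{C^{0,\theta}(\hat\omega)}$. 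It therefore suffices to prove the scale-free inequality $|\hat{\tw}|_{C^{0,\theta}(\hat\omega)}\lesssim|\hat w|_{1+\theta,\hat\omega}$ on $\hat\omega$.

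On the reference star, since $\ell\ge 1$ the Lagrange interpolant $\hat I$ preserves $\mathcal{P}^1$, so $\hat{\tw}=(\hat w-p)-\hat I(\hat w-p)$ for any $p\in\mathcal{P}^1$. Choose $p$ so that $\hat w-p$ and each of its first partial derivatives have vanishing mean over $\hat\omega$. Two applications of Lemma~\ref{L:Poincare-meanvalue} (one to each $D^\beta(\hat w-p)$ with $|\beta|=1$, one to $\hat w-p$ itself), combined with the identity $|\hat w-p|_{1+\theta,\hat\omega}=|\hat w|_{1+\theta,\hat\omega}$ (since second derivatives of $p$ vanish), yield $\|\hat w-p\|_{1+\theta,\hat\omega}\lesssim|\hat w|_{1+\theta,\hat\omega}$. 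As $1+\theta>d/2=1$, Lemma~\ref{L:basic-properties}(iii) provides the continuous embedding $H^{1+\theta}(\hat\omega)\hookrightarrow C^{0,\theta}(\hat\omega)$, and $\hat I$ is continuous on $C^{0,\theta}(\hat\omega)$ because its basis functions are polynomials and its action involves only finitely many pointwise evaluations. Assembling these bounds produces the required reference-star estimate.

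The main obstacle is that distinct stars are not pairwise affinely equivalent, their combinatorial type depending on the number of triangles meeting at $z$; shape regularity caps this number so only finitely many reference configurations arise and all constants can be taken uniform in $\sigma_\T$. A secondary technicality is upgrading the piecewise H\"older smoothness of $\hat I\hat w$ on each triangle of $\hat\omega$ to global H\"older continuity on $\hat\omega$: this is done by joining any two points of $\hat\omega$ by the straight segment between them, which crosses only finitely many triangles, and summing the triangle-wise H\"older bounds along its successive crossings.
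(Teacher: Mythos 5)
Your overall strategy matches the paper's in spirit — establish a H\"older estimate for $\tw$ and then integrate to get the $L^2$-bounds — but your reduction to a ``reference star'' has two genuine problems, one of which is serious.

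The serious gap is the final ``gluing'' step, where you pass from triangle-wise H\"older bounds to a global H\"older bound on $\hat\omega$ by ``joining any two points of $\hat\omega$ by the straight segment between them.'' Stars are star-shaped with respect to $z$ but need not be convex: for a boundary vertex the angular span can exceed $\pi$, and even for an interior vertex a shape-regular but irregular fan of triangles can produce a non-convex boundary polygon. In such cases the straight segment leaves $\hat\omega$, and the triangle-wise estimates cannot be summed along it. The paper resolves exactly this issue by invoking the polygonal path construction of Lemma~3.4 in~\cite{SaccVee}, which produces, for any $x,y\in\omega_z$, a path \emph{inside} the star made of segments within single elements and of total length $\lesssim|x-y|$, with the hidden constant controlled by $\sigma_\T$ and the Lipschitz character of $\partial\Omega$. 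Without such a lemma your argument does not close.

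The secondary issue is the claim that ``$\hat\omega$ belongs to a finite list of combinatorial types determined by $\sigma_\T$'' and that this suffices to make the constants in the star-wise Poincar\'e and Sobolev embedding inequalities uniform. Shape regularity bounds the \emph{number} of triangles in a star, but within each combinatorial type the triangles' angles vary continuously, so there is a continuum of reference stars, not a finite list; uniformity of the constants must be argued separately (e.g.\ via the explicit bound \eqref{CP<=} together with $\diam\hat\omega\simeq 1$, $|\hat\omega|\gtrsim 1$, and uniform Lipschitz regularity of $\partial\hat\omega$). The paper avoids this complication entirely: it proves the H\"older estimate on the single fixed reference triangle $\That$ using Lemma~\ref{L:Poincare-bis} and Lemma~\ref{L:basic-properties}(iii), scales to an arbitrary $T\in\T$ via Lemma~\ref{L:scaling}(i), and only then patches over the star via the polygonal path. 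This ``triangle first, then path'' ordering is cleaner precisely because it never needs estimates on a variable-shape reference domain. Your choice of $p\in\mathcal{P}^1$ with vanishing means of $\hat w-p$ and its first derivatives is essentially the mechanism inside the proof of Lemma~\ref{L:Poincare-bis}, so that part is sound; it is the passage to the star that needs repair.
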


\begin{proof}
We start by deriving the first inequality, where the domain is the reference triangle $\That$ instead of some star $\omega_z$.  Thanks to Lemma \ref{L:basic-properties}~(\ref{it:sobolev-embedding}), the bound $\|I^\ell w\|_{1+\theta,\That} \lesssim \max_{\That} |w|$, and Lemma \ref{L:Poincare-bis}, we have, for $x,y\in\That$,
\begin{equation*}
 \frac{|\tw(x)-\tw(y)|}{|x-y|^{\theta}}
 \lesssim
 \|\tw\|_{1+\theta,\That}
 \lesssim
 |\tw|_{1+\theta,\That}
  = 
 |w|_{1+\theta,\That},
\end{equation*}
where the hidden constant depends only on $\theta$ and $\ell$.  In view of Lemma~\ref{L:scaling} (i) and its proof, both sides scale in the same way under affine transformations of the domain.  Hence, for any triangle $T\in\T$, we obtain
\begin{equation}
\label{Hoelder;T}
 |\tw(x)-\tw(y)|
 \lesssim
 |x - y|^{\theta} |w|_{1+\theta,T}.
\end{equation}
If $x$ and $y$ are two arbitrary points in $\omega_z$, we connect them with a polygonal path, made of straight segments in each element of $\omega_z$ and having total length $\lesssim |x-y|$.  The existence and construction of such a path is presented in Lemma 3.4 of~\cite{SaccVee}, where the involved constant depends on $\sigma_\T$ and the Lipschitz constant associated with $\partial\Omega$.  Applying \eqref{Hoelder;T} segmentwise, we obtain the first claimed inequality.  Integrating it, we readily deduce the other ones.
%
\end{proof}

\subsection{Lower Bounds}
\label{S:lower-bound}
In this section, we assess the sharpness of the upper bound in Theorem \ref{T:upper-bound}, dealing with the two parts $\eta_\theta$ and $\xi_\theta$ separately.

\medskip Let us start with the oscillation $\xi_\theta$ defined in \eqref{osc}.  We first recall that, typically, oscillation terms are not shown to be bounded by the error, but are, formally, of higher order.  Here, we encounter similar properties for $\xi_\theta$.  Indeed, Remark \ref{R:osc-scaling} suggests that, under global uniform refinement, $\xi_\theta$ decreases at least with the order of the error $|u-U|_{1+\theta}$.  Moreover, Remark \ref{R:osc-ref} suggests that  $\xi_\theta$ even vanishes after a finite number of appropriate refinements in a reasonable adaptive algorithm.  In such cases, $\xi_\theta$ is then of arbitrarily higher order. 

Our main result about the sharpness of $\eta_\theta$ from \eqref{est}, or of the asymptotic form of the upper bound in Remark \ref{R:asym-ubd}, is as follows.
\begin{theorem}[Lower bounds]
\label{T:lower-bound}
Let $u$ be the solution of Problem \eqref{Problem}, $U$ its approximation associated with the triangulation $\T$ and $0<\theta<1/2$.  For any triangle $T\in\T$, we have the local bound
\begin{equation*}
 \eta_{T,\theta}
 \le
 C_L \left|u-U\right|_{1-\theta, \omega_T},
\end{equation*}
where $\omega_T$ is the patch of all triangles of $\T$ sharing a side with $T$.  Furthermore, we have also the global bound
\begin{equation*}
 \eta_{\theta}
 \le
 \tilde C_L |u-U|_{1-\theta} 
\end{equation*}
Both constants $C_L$, $\tilde C_L$ depend only on $\theta$, the polynomial degree $\ell$, the shape coefficient $\sigma_\T$ and the number $N$ of point sources. 
\end{theorem}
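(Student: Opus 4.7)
The plan is to adapt the classical bubble-function technique of Verf\"urth to the present setting.  Two novelties must be accommodated: (i) the error is measured in the fractional seminorm $|\cdot|_{1-\theta}$ and is tested against $H^{1+\theta}_0(\Omega)$ via the Ne\v{c}as extension of $\B$; and (ii) the right-hand side of \eqref{Problem} carries point masses $\delta_{x_j}$, so that a na\"ive bubble $v=b_T\Delta U$ makes the error identity pick up the term $\sum_{x_j\in T}\alpha_j v(x_j)$, which is at the wrong scale.  The remedy is to replace $b_T$ by a polynomial bubble that vanishes at every point source in its support, and this is precisely what introduces the dependence on $N$ in the constants $C_L$, $\tilde C_L$.

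For the element residual on $T\in\T$, let $J_T:=\{j:x_j\in T\}$ and set
\[
v_T := b_T\,q_T\,(\Delta U)|_T,
\qquad
q_T(x) := \prod_{j\in J_T}|x-x_j|^2,
\]
extended by zero to $\Omega$, with $b_T$ the standard cubic bubble on $T$.  Then $v_T$ is a polynomial of bounded degree, vanishes on $\partial T$ and at every $x_j\in T$, and (because $\theta<\tfrac12$) its zero extension lies in $H^{1+\theta}_0(\Omega)$ by the Lions--Magenes zero-extension theorem.  A compactness argument on the configuration space $\That^{|J_T|}$ yields the weighted norm equivalence
\[
\int_{\That}\hat b_{\That}\,\hat q\,\hat p^{\,2}
\;\simeq\;
\|\hat p\|_{0,\That}^2,
\qquad \hat p\in\mathcal P^{\ell-2}(\That),
\]
with constants depending only on $N$, $\ell$ and $\theta$.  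Combined with the scaling rules of Lemma~\ref{L:scaling}, this produces both $\int_T b_T q_T(\Delta U)^2\simeq h_T^{2|J_T|}\|\Delta U\|_{0,T}^2$ and $|v_T|_{1+\theta,T}\lesssim h_T^{2|J_T|-1-\theta}\|\Delta U\|_{0,T}$.

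Testing \eqref{Problem} against $v_T$ and integrating by parts element-wise yields
\[
\B[u-U,v_T]
\;=\;\sum_j\alpha_j v_T(x_j)-\B[U,v_T]
\;=\;\int_T b_T\,q_T\,(\Delta U)^2,
\]
since $v_T\equiv0$ on $\partial T$ and $v_T(x_j)=0$ for every $x_j\in T$.  Applying Lemma~\ref{L:Necas-4.1} on $T$ (absorbing additive constants via Lemma~\ref{L:Poincare-meanvalue}) and controlling $\|v_T\|_{1+\theta,T}$ by $|v_T|_{1+\theta,T}$ via Lemma~\ref{L:Poincare} together with the inverse-type bound $|v_T|_{1,T}\lesssim h_T^\theta|v_T|_{1+\theta,T}$ available on polynomials of fixed degree, we get $|\B[u-U,v_T]|\lesssim|u-U|_{1-\theta,T}\,|v_T|_{1+\theta,T}$.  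Combining the last three displays and dividing by $\|\Delta U\|_{0,T}$, the factor $h_T^{2|J_T|}$ cancels exactly and leaves the element estimate $h_T^{1+\theta}\|\Delta U\|_{0,T}\lesssim|u-U|_{1-\theta,T}$.

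The jump on an interior edge $E=T_1\cap T_2$ is treated analogously with the standard edge bubble $b_E$, weight $q_E:=\prod_{x_j\in\omega_E}|x-x_j|^2$, and $v_E:=b_E\,q_E\,P$, where $P$ is a polynomial extension of $\llbracket\nabla U\rrbracket_{|E}$ to $\omega_E$ (e.g., constant in the normal direction).  Integration by parts on $T_1$ and $T_2$ yields
\[
\int_E b_E\,q_E\,\llbracket\nabla U\rrbracket^2
\;=\;
\int_{\omega_E}(\Delta U)\,v_E\;-\;\B[u-U,v_E],
\]
and weighted norm equivalence on $E$ for the left-hand side, Cauchy--Schwarz with scaling for the volume term, localized Ne\v{c}as for the bilinear form, and the element estimate just proved (using $h_E\simeq h_T$ for $T\subset\omega_E$) produce $h_E^{1/2+\theta}\|\llbracket\nabla U\rrbracket\|_{0,E}\lesssim|u-U|_{1-\theta,\omega_E}$.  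Collecting element and edge contributions over $\partial T$ gives the local bound $\eta_{T,\theta}\lesssim|u-U|_{1-\theta,\omega_T}$; summing over $T$ and using that $|\cdot|_{1-\theta}^2$ is a double integral over $\Omega\times\Omega$, so the patches overlap only a bounded number of times (controlled by $\sigma_\T$), delivers the global bound $\eta_\theta\lesssim|u-U|_{1-\theta}$.  The main obstacle I foresee is keeping the constants in the compactness-based weighted norm equivalences uniform in the positions of the $x_j$'s within $\That$; this is what makes the final constants depend on $N$ but not on the mesh.
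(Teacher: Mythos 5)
Your proposal is correct in its overall structure and conclusion, but it takes a genuinely different route from the paper to neutralize the point sources.  The paper does not modify the bubble: instead, it subdivides every triangle into $M^2$ congruent sub-triangles and every edge into $M$ sub-edges, with $M=2(N_\T+1)$, so that one can always select a sub-triangle $T^*\subset T$ and a sub-edge $E^*\subset E$ whose associated patches contain no point source.  It then uses \emph{smooth} cut-off functions $\eta_{T^*},\eta_{E^*}$ (compactly supported exponential bumps, Lemmas~\ref{L:aux1}--\ref{L:aux2}) rather than polynomial bubbles.  Because these test functions have support strictly inside the element, the paper can prove the local continuity estimate $\int_T\nabla v\cdot\nabla\varphi\lesssim|v|_{1-\theta,T}|\nabla\varphi|_{\theta,T}$ (Lemma~\ref{L:local-cont}) via a simple Poincar\'e argument on $T\setminus\Tilde{B}$, while the norm equivalences $\int_T V^2\lesssim\int_T V^2\eta_T$ and $|\nabla(V\eta_T)|_{\theta,T}\lesssim h_T^{-1-\theta}\|V\|_{0,T}$ are position-independent and follow directly from finite-dimensionality on the reference element.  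Your approach instead keeps the classical Verf\"urth polynomial bubble on the full element and multiplies it by the configuration-dependent weight $q_T=\prod_{j\in J_T}|\cdot-x_j|^2$; this is a legitimate alternative, but it shifts the technical burden to two places the paper avoids: (i) you must establish that the weighted norm equivalence $\int_{\hat T}\hat b\,\hat q\,\hat p^2\simeq\|\hat p\|_{0,\hat T}^2$ holds uniformly over all positions $(\hat x_1,\dots,\hat x_{|J_T|})\in\overline{\hat T}^{|J_T|}$ (your compactness argument does deliver this, since the minimal eigenvalue is a continuous, everywhere positive function on that compact configuration space), and (ii) because $\supp v_T=\overline T$ you cannot invoke the paper's Lemma~\ref{L:local-cont} directly and must instead use the inverse estimate $|v_T|_{1,T}\lesssim h_T^\theta|v_T|_{1+\theta,T}$, which requires checking that $|\cdot|_{1+\theta,\hat T}$ is a norm on the relevant finite-dimensional space of polynomials vanishing on the appropriate edges.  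Both routes produce the same dependence of $C_L$, $\tilde C_L$ on $N$ (through $N_\T$); the paper's sub-triangle construction is cleaner in that its constants are manifestly uniform without any configuration-space compactness, whereas your weighted-bubble approach stays closer to the classical residual technique and avoids the geometric subdivision.  One small inaccuracy worth fixing: your final summation argument for the global bound should appeal, as the paper does, to the bounded overlap of the patches $\omega_T$ measured by the cardinality of $\{T'\in\T:\omega_{T'}\supset T\}$, not to the double-integral structure of $|\cdot|_{1-\theta}^2$ per se, since the local seminorms over distinct patches are not disjoint contributions to the global one.
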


\begin{remark}[Asymptotic independence on point sources] 
The dependence on $N$ is actually through the maximum number $N_\T$ of point sources supported in one element of $\T$.  After a finite number of suitable refinement steps, every triangle will contain at most the support of one point source.  For the same reason as in Remark \ref{R:asym-ubd}, one expects that these refinement steps are actually quickly accomplished by a (reasonable) adaptive algorithm.  We therefore may say that the constants $C_L$ and $\tilde C_L$ are asymptotically independent of $N$.  Combining this with the asymptotic upper bound in Remark \ref{R:asym-ubd}, we see that, asymptotically, the error $|u-U|_{1-\theta}$ is encapsulated with a~posteriori quantities that are independent on the point sources in Problem \eqref{Problem}.
\end{remark}

The proof of Theorem \ref{T:lower-bound} uses the constructive approach of Verf\"urth \cite{V}.  In order to adapt it to our setting with fractional Sobolev space at hand, we need the following preparations concerning the local continuity of $\B$ and suitable test functions with local support.  These test functions will be products of polynomials and cut-off functions.  For the latter, we shall use the following type: given a ball of radius $r$ with midpoint $z\in\R^2$, set
\[
 \eta_B (x)
 =
 \eta \left( \frac{x-z}{r} \right)
\quad\text{where}\quad
 \eta(x)
 =
 \begin{cases} 
  \exp \left( \frac{1}{|x|^2-1} \right) &\text{if } |x| < 1, 
 \\
  0 &\text{otherwise}.
\end{cases}
\]

\begin{lemma}[Cut-off within triangles]
\label{L:aux1}
Let $k\in\mathbb{N}_0$ and  $T$ be a triangle.  Moreover, let $\Hat{B}$ be the ball with maximal radius in the reference triangle $\Hat{T}$ and $F:\R^2\to\R^2$ be an affine bijection with $F(\Hat{T})=T$.  Then the cut-off function $\eta_T := \eta_{\Hat{B}}\circ F^{-1}$ satisfies, for all $V \in \P^k(T)$,
\[
 \int_T V^2 \lesssim  \int_T V^2 \eta_T
\quad\text{and}\quad
 | \nabla(V\eta_T) |_{\theta,T}
 \lesssim
 h_T^{-1-\theta} \norma{V}_{0,T}.
\]
The hidden constants depend only on $\theta$, $k$, and the minimal angle of $T$. 
\end{lemma}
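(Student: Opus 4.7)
The plan is the standard two-step strategy: prove both inequalities on the reference triangle $\Hat{T}$ using the finite-dimensionality of $\P^k$, and then transfer them to a general triangle $T$ via Lemma~\ref{L:scaling}(i).

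On $\Hat{T}$, I would first observe that both $\Hat{V} \mapsto (\int_{\Hat{T}} \Hat{V}^2)^{1/2}$ and $\Hat{V} \mapsto (\int_{\Hat{T}} \Hat{V}^2 \eta_{\Hat{B}})^{1/2}$ are norms on the finite-dimensional space $\P^k$: the first is evident; the second holds because $\eta_{\Hat{B}} > 0$ on the open ball $\Hat{B}$ and any polynomial vanishing on a set of positive measure is identically zero. Equivalence of norms on finite-dimensional spaces then yields $\int_{\Hat{T}} \Hat{V}^2 \lesssim \int_{\Hat{T}} \Hat{V}^2 \eta_{\Hat{B}}$ with a constant depending only on $k$. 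For the second inequality, $\eta_{\Hat{B}} \in C_c^\infty(\Hat{T})$ implies $\Hat{V}\eta_{\Hat{B}} \in C_c^\infty(\Hat{T})$, so $\Hat{V} \mapsto |\nabla(\Hat{V}\eta_{\Hat{B}})|_{\theta,\Hat{T}}$ is a continuous seminorm on the finite-dimensional space $\P^k$; by norm equivalence it is dominated by $\|\Hat{V}\|_{0,\Hat{T}}$ with a constant depending on $k$ and $\theta$.

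For the scaling step, I would set $\Hat{V} := V \circ F$, so that $(V\eta_T) \circ F = \Hat{V}\eta_{\Hat{B}}$ and $V \in \P^k(T)$ iff $\Hat{V} \in \P^k(\Hat{T})$. For the first inequality, the change of variables produces the common Jacobian factor $|\det(A)| \simeq h_T^2$ on each side, so the reference estimate transfers directly. For the second, I would unpack the definition of the fractional seminorm to identify $|\nabla(V\eta_T)|_{\theta,T}$ with the Gagliardo seminorm $|V\eta_T|_{1+\theta,T}$ (the indices $|\beta|=1$ on $\R^2$ simply enumerate the two first-order partials), then invoke Lemma~\ref{L:scaling}(i) with $s=1+\theta$ and with $s=0$ to obtain $|V\eta_T|_{1+\theta,T} \simeq h_T^{-\theta}\,|\Hat{V}\eta_{\Hat{B}}|_{1+\theta,\Hat{T}}$ and $\|\Hat{V}\|_{0,\Hat{T}} \simeq h_T^{-1}\|V\|_{0,T}$. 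Chaining these with the reference-element bound produces exactly the factor $h_T^{-1-\theta}$.

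The only point requiring mild care is the identification of $|\nabla(V\eta_T)|_{\theta,T}$ with $|V\eta_T|_{1+\theta,T}$, which is however immediate from the definition of the fractional seminorm once one reads $|\nabla\cdot|_{\theta,T}$ componentwise. Otherwise the argument is routine: the dependence of the hidden constants on $k$, $\theta$, and the minimum angle of $T$ arises, respectively, from the norm equivalence on $\P^k$ and from the shape-regularity constants in Lemma~\ref{L:scaling}.
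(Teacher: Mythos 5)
Your proof is correct and takes essentially the same approach as the paper: reduce to the reference triangle via Lemma~\ref{L:scaling}(i), then invoke equivalence of norms on the finite-dimensional space $\P^k(\Hat T)$. Your write-up is more explicit about why $\Hat V\mapsto\left(\int_{\Hat T}\Hat V^2\eta_{\Hat B}\right)^{1/2}$ is a norm and about the identification $|\nabla(V\eta_T)|_{\theta,T}=|V\eta_T|_{1+\theta,T}$ that lets one apply the scaling lemma with $s=1+\theta$, but the underlying argument is identical.
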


\begin{proof}
In view of the transformation rule and Lemma \ref{L:scaling}, the claim is equivalent to
\[
 \int_{\Hat{T}} \Hat{V}^2
 \lesssim
 \int_{\Hat{T}} \Hat{V}^2 \eta_{\Hat{B}}
\quad\text{and}\quad
 | \nabla(\Hat{V} \eta_{\Hat{B}}) |_{\theta,\Hat{T}}
 \lesssim
 \norma{\Hat{V}}_{0,\Hat{T}}
\]
for all $\Hat{V}\in\P^k(\Hat{T})$.  This statement in turn follows from the equivalence of norms on the finite-dimensional spaces $\P^k(\Hat{T})$ and $\P^k(\Hat{T})/\R$.
\end{proof}

\begin{lemma}[Cut-off across edges]
\label{L:aux2}
Let $k\in\mathbb{N}_0$ and $E = T_1 \cap T_2$ be the common edge of two triangles $T_1$ and $T_2$.  For $i=1,2$, denote by $\Hat{T}_i$ the reference triangles with vertices $(0,0), (1,0), (0,(-1)^i)$ and indicate by $\Hat{B}_{12}$ the ball with maximal radius in the reference patch $\Hat{T}_1\cup\Hat{T}_2$.  Moreover, let $F:\R^2\to\R^2$ be a piecewise affine bijection with $F(\Hat{T}_i)=T_i$, $i=1,2$. Then the cut-off function $\eta_E := \eta_{\Hat{B}}\circ F^{-1}$ satisfies, for all $V \in \P^k(E)$ and $i=1,2$,
\[
 \int_E V^2
 \lesssim
 \int_E V^2 \, \eta_E
\quad\text{and}\quad
 | \nabla(\overline{V} \eta_E) |_{\theta,T_i} 
 \lesssim
 h_E ^{-1-\theta} \norma{ \overline{V} \eta_E }_{0,T_i}
 \lesssim
 h_E^{-\frac12-\theta} \norma{V}_{0,E},
\]
where $h_E$ denotes the length of $E$ and $\overline V$ is a suitable extension of $V$.  The hidden constants depend only on $\theta$, $k$, and the minimal angle of $T$. 
\end{lemma}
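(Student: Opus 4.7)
The strategy parallels that of Lemma~\ref{L:aux1}: I transfer each inequality to the reference patch $\Hat T_1\cup\Hat T_2$ via Lemma~\ref{L:scaling}, reduce it to a finite-dimensional norm equivalence there, and scale back using $h_E\simeq h_{T_i}$. I first specify the extension: setting $\hat V(\hat x):=V\circ F(\hat x,0)$ for $\hat x\in[0,1]$, I let $\hat{\overline V}(\hat x,\hat y):=\hat V(\hat x)$ and define $\overline V:=\hat{\overline V}\circ F^{-1}$. Since $F$ is affine on each $\Hat T_i$, the restriction $\overline V|_{T_i}$ is a polynomial of degree $\le k$ that agrees with $V$ on $E$.

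The first inequality reduces, by a one-dimensional change of variables on $E$, to
\[
 \int_{\hat E} \hat V^2
 \lesssim
 \int_{\hat E} \hat V^2\,\hat\eta_{\Hat B_{12}},
\]
which is a norm equivalence on $\P^k(\hat E)$: the right-hand side defines a norm because $\hat\eta_{\Hat B_{12}}$ is strictly positive on a sub-interval of $\hat E$, and any polynomial vanishing on such an interval vanishes identically.

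For the inverse-type bound, Lemma~\ref{L:scaling}(i) yields $\|\overline V\eta_E\|_{0,T_i}\simeq h_{T_i}\|\hat{\overline V}\hat\eta_{\Hat B_{12}}\|_{0,\Hat T_i}$, while the chain rule contributes $\|A^{-1}\|\simeq h_{T_i}^{-1}$ per spatial derivative and the $\theta$-seminorm itself scales with $h_{T_i}^{1-\theta}$, so altogether $|\nabla(\overline V\eta_E)|_{\theta,T_i}\simeq h_{T_i}^{-\theta}|\hat\nabla(\hat{\overline V}\hat\eta_{\Hat B_{12}})|_{\theta,\Hat T_i}$. Combined with $h_E\simeq h_{T_i}$, the bound $|\nabla(\overline V\eta_E)|_{\theta,T_i}\lesssim h_E^{-1-\theta}\|\overline V\eta_E\|_{0,T_i}$ reduces to
\[
 |\hat\nabla(\hat{\overline V}\hat\eta_{\Hat B_{12}})|_{\theta,\Hat T_i}
 \lesssim
 \|\hat{\overline V}\hat\eta_{\Hat B_{12}}\|_{0,\Hat T_i},
\]
another norm equivalence on the finite-dimensional space $\{\hat{\overline V}\hat\eta_{\Hat B_{12}} : \hat V\in\P^k(\hat E)\}$, whose right-hand side is a norm on it by the same positivity argument as above.

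The remaining estimate $\|\overline V\eta_E\|_{0,T_i}\lesssim h_E^{1/2}\|V\|_{0,E}$ follows by combining the finite-dimensional bound $\|\hat{\overline V}\hat\eta_{\Hat B_{12}}\|_{0,\Hat T_i}\lesssim\|\hat V\|_{0,\hat E}$ with the two-dimensional scaling $\|\overline V\eta_E\|_{0,T_i}\simeq h_{T_i}\|\hat{\overline V}\hat\eta_{\Hat B_{12}}\|_{0,\Hat T_i}$ and the one-dimensional scaling $\|V\|_{0,E}\simeq h_E^{1/2}\|\hat V\|_{0,\hat E}$ along $E$. I do not anticipate any substantial obstacle beyond the bookkeeping of the three powers of $h_E$ produced, respectively, by the gradient ($h_E^{-1}$), the fractional seminorm of order $\theta$ (net $h_E^{-\theta}$), and integration along the codimension-one edge $E$ ($h_E^{1/2}$); Lemma~\ref{L:scaling} keeps these contributions transparent.
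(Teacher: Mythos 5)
Your proposal is correct and follows essentially the same route as the paper: pass to the reference configuration via Lemma~\ref{L:scaling}, reduce each inequality to a norm equivalence on a finite-dimensional polynomial space, and scale back using $h_E\simeq h_{T_i}$. The paper's own proof is more terse but relies on exactly the same transformation and the same finite-dimensional argument, with the same constant-in-$\hat y$ extension of $\hat V$; your explicit bookkeeping of the powers of $h_E$ and of which finite-dimensional space carries the norm equivalence merely spells out what the paper leaves implicit.
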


\begin{proof}
In view of the transformation rule and Lemma \ref{L:scaling}, the claim is equivalent to the following statement associated with the reference edge given by the vertices $(0,0)$, $(1,0)$: for all $\Hat{V}\in\P^k(\Hat{E})$, we have
\[
 \int_{\Hat{E}} \Hat{V}^2
 \lesssim
 \int_{\Hat{E}} \Hat{V}^2 \eta_{\Hat{B}}
\quad\text{and}\quad
 | \nabla(\overline{\Hat{V}} \eta_{\Hat{B}}) |_{\theta,\Hat{T}_i}
 \lesssim
 \norma{ \overline{\Hat{V}} \eta_{\Hat{B}} }_{0,\Hat{T}_i}
 \lesssim
 \norma{\Hat{V}}_{0,\Hat{E}},
\]
where the extension $\overline{\Hat{V}}$ of $\Hat{V}$ is given by $\overline{\Hat{V}}(x_1,x_2) = \Hat{V}(x_1)$.  Again, these inequalities follow from the equivalence of norms on the finite-dimensional spaces $\P^k(\Hat{T}_i)$ and $\P^k(\Hat{T}_i)/\R$.
\end{proof}

\begin{lemma}[Local continuity of $\B$]
\label{L:local-cont}
Given $0 < \theta < 1/2$ and any triangle $T$, we have
\begin{equation*}
	\int_T \nabla v \cdot \nabla \varphi\, dx 
	\lesssim
	| v |_{1-\theta,T} | \nabla\varphi |_{\theta,T}
\end{equation*}
for all $v \in H^{1-\theta}(T)$ and $\varphi\in H^{1+\theta}(T)$ such that $\supp\varphi = \supp\eta_T$ or $\supp\eta_E$ with $\eta_T$, $\eta_E$ from Lemmas \ref{L:aux1} and \ref{L:aux2}. The meaning of the left-hand side is given by continuous extension and	the hidden constant depends on the minimal angle of $T$.
\end{lemma}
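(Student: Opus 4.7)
The plan is to reduce to the reference triangle, apply Ne\v{c}as's inequality (Lemma~\ref{L:Necas-4.1}), and then exploit the support hypothesis on $\varphi$ to downgrade both full Sobolev norms on the right-hand side of Ne\v{c}as's bound to the desired seminorms.

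By a density argument I may assume $v$ and $\varphi$ are smooth. Mapping via an affine bijection $F:\That\to T$ in the interior case ($\supp\varphi=\supp\eta_T$), or via the piecewise affine $F:\hat T_1\cup\hat T_2\to T_1\cup T_2$ followed by restriction to $T$ in the edge case ($\supp\varphi=\supp\eta_E$), Lemma~\ref{L:scaling}(i) renders both sides of the desired estimate scale consistently in $h_T$, so it suffices to prove it on the reference $\That$. Applying Ne\v{c}as's inequality componentwise to the pairings $\int\partial_i\hat v\cdot\partial_i\hat\varphi$ then yields
\[
\int_{\That}\nabla\hat v\cdot\nabla\hat\varphi\,d\xhat\le C(\theta)\,\|\hat v\|_{1-\theta,\That}\,\|\nabla\hat\varphi\|_{\theta,\That}.
\]

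The $\hat v$-factor on the right is downgraded to a seminorm by a mean-value shift: letting $c:=|\That|^{-1}\int_{\That}\hat v$, one has $\int_{\That}\nabla(\hat v-c)\cdot\nabla\hat\varphi=\int_{\That}\nabla\hat v\cdot\nabla\hat\varphi$ since $\nabla$ kills constants, and Lemma~\ref{L:Poincare-meanvalue} gives $\|\hat v-c\|_{1-\theta,\That}\lesssim|\hat v|_{1-\theta,\That}$. It remains to absorb the $L^2$-piece of $\|\nabla\hat\varphi\|_{\theta,\That}$ into $|\nabla\hat\varphi|_{\theta,\That}$, which is the only step where the support hypothesis is used essentially. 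The support assumption forces each component of $\nabla\hat\varphi$ to vanish identically on a fixed set $A\subset\That$ of positive measure --- the complement of the reference ball $\hat B$ in the interior case, or the complement in $\That$ of $\That\cap\hat B_{12}$ in the edge case. For any $g\in H^\theta(\That)$ with $g\equiv 0$ on $A$, exploiting $g(\yhat)=0$ for $\yhat\in A$ and $|\xhat-\yhat|\le\diam(\That)$ delivers the Poincar\'e-type estimate
\[
\|g\|_{L^2(\That)}^2=\frac{1}{|A|}\int_{\That}\int_A|g(\xhat)-g(\yhat)|^2\,d\yhat\,d\xhat\le\frac{\diam(\That)^{d+2\theta}}{|A|}\,|g|_{\theta,\That}^2,
\]
which applied componentwise to $\nabla\hat\varphi$ supplies the desired reduction.

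Combining the three ingredients yields $\int_{\That}\nabla\hat v\cdot\nabla\hat\varphi\lesssim|\hat v|_{1-\theta,\That}\,|\nabla\hat\varphi|_{\theta,\That}$; Lemma~\ref{L:scaling}(i) then transfers this to $T$ with matching $h_T$-powers on both sides, and continuous extension handles general $v\in H^{1-\theta}(T)$. The main obstacle is the Poincar\'e-type inequality in the previous paragraph: no such bound is available for a generic $g\in H^\theta(\That)$ (constants are an immediate counterexample), so the vanishing of $\nabla\hat\varphi$ on a positive-measure subset --- a consequence of the specific support shape prescribed by $\eta_T$ and $\eta_E$ --- must be used in an essential way. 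Once this observation is in hand, the remainder is a clean reduction to Ne\v{c}as plus the fractional Poincar\'e inequality.
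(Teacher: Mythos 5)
Your proof is correct and follows essentially the same route as the paper: reduce to a reference triangle via Lemma~\ref{L:scaling}, subtract the mean from $v$, invoke Lemma~\ref{L:Necas-4.1}, and absorb the $L^2$-part of $\|\nabla\varphi\|_{\theta}$ using the fact that $\nabla\varphi$ vanishes on a positive-measure subset of the reference element. The only cosmetic divergence is the last absorption step, where you reprove a Faermann-style Poincar\'e inequality directly from the definition of the fractional seminorm, whereas the paper routes through the two-step mean-shift/Cauchy--Schwarz argument of step 2 in the proof of Lemma~\ref{L:Poincare} with $\partial G$ replaced by $\tilde T\setminus\tilde B$ — same underlying idea.
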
 

\begin{proof}
In view of Lemma \ref{L:scaling}, both sides of the claimed inequality scale in the same manner under affine transformations.  We therefore can assume that $T=\Tilde{T}$, where $\Tilde{T}$ is one of the reference triangles $\Hat{T}$, $\Hat{T}_1$, $\Hat{T}_2$. Correspondingly, we write $\Tilde{B}$ for $\Hat{B}$ or $\Hat{B}_{12}$. We set $c = |\Tilde{T}|^{-1}\int_{\Tilde{T}} v$ and apply Lemma \ref{L:Necas-4.1} to obtain 
\begin{align*}
	\int_{\Tilde{T}} \nabla v \cdot \nabla \varphi\, dx
	=
	\int_{\Tilde{T}} \nabla (v-c) \cdot \nabla \varphi\, dx
	\lesssim
	\| v - c\|_{1-\theta,\Tilde{T}}
	 \| \nabla\varphi \|_{\theta,\Tilde{T}}.
\end{align*}
Using Lemma \ref{L:Poincare-meanvalue} and replacing $\partial G$ by the 2-dimensional set $\Tilde{T}\setminus\Tilde{B}$ in step 2 of the proof of Lemma \ref{L:Poincare}, we conclude with
\[
	\| v - c\|_{1-\theta,T}
	\lesssim
	| v |_{1-\theta,T}
	\quad\text{and}\quad
	\| \nabla\varphi \|_{\theta,T}
	\lesssim
	| \nabla\varphi |_{\theta,T}.
	\qedhere
\]
\end{proof}

After these preparations, we are ready to prove the claimed lower bounds.

\begin{proof}[Proof of Theorem~\ref{T:lower-bound}]
\step{1} We shall use test functions, whose support does not contain point sources.  To construct them, we shall exploit Lemmas \ref{L:aux1} and \ref{L:aux2} for the following sub-triangles.
Let $N_\T$ be the maximum number of Dirac masses supported in a triangle $T\in\T$ and set
\begin{equation}
\label{DefM}
 M := 2(N_\T+1).
\end{equation}
We divide each edge $E$ of $\T$ into $M$ equal sub-edges and denote by $\SS_E^M$ the set of these sub-edges.  Moreover, for any triangle $T\in\T$,  we join the endpoints of the sub-edges by lines parallel to the edges of $T$ and so divide $T$ into $M^2$ equivalent sub-triangles.  We indicate with $\SS_T^M$ this set of sub-triangles; see also Figure~\ref{F:bubbles}.   The choice \eqref{DefM} of $M$ ensures that
\begin{enumerate}
\item[(a)] for each $T \in \T$, we can choose a sub-triangle $T^* \in \SS_T^M$ that does not contain any point source appearing in \eqref{Problem}, 
\item[(b)] for each edge $E$ of the triangulation $\T$, we can choose a sub-segment $E^* \in \SS_E^M$ such that no point source appearing in \eqref{Problem} is supported in the union of the two sub-triangles adjacent to $E^*$.
\end{enumerate}

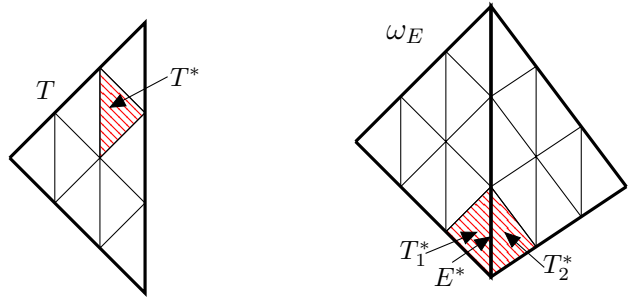
\begin{figure}
	\qquad
	\begin{tikzpicture}[scale=0.6]
	\draw[very thick] (0,0)--(3,3) -- (3,-3)-- (0,0);
	\draw (1,1)--(1,-1)--(3,1)--(2,2)--(2,-2)--(3,-1)--(1,1) ;
	\draw[pattern=north west lines, pattern color=red] (2,2)--(3,1)--(2,0)--(2,2);
	\node at (0.8,1.5) {$T$};
	\node at (3.9,1.8) {$T^*$};
	\draw[-triangle 45] (3.5,1.8)--(2.2,1.11) ;
	\end{tikzpicture}
	\hfil
	\begin{tikzpicture}[scale=0.6]
	\draw[pattern=north west lines, pattern color=red] (2,-2)--(3,-1)--(4,-7/3)--(3,-3)--(2,-2);
	\draw[very thick] (0,0)--(3,3) -- (3,-3)-- (0,0);
	\draw (1,1)--(1,-1)--(3,1)--(2,2)--(2,-2)--(3,-1)--(1,1) ;
	\draw[very thick] (6,-1)--(3,3) -- (3,-3)-- (6,-1);
	\draw (5,1/3)--(5,-5/3)--(3,1)--(4,5/3)--(4,-7/3)--(3,-1)--(5,1/3) ;
	\node at (1.33,-2.48) {$T_1^*$};
	\node at (4.5,-2.79) {$T_2^*$};
	\node at (2.1,-3) {$E^*$};
	\node at (1.11,2.38) {\Large{$\omega_E$}};
	\draw[-triangle 45] (1.55,-2.48)--(2.71,-2) ;
	\draw[-triangle 45] (2.14,-2.68)--(3,-2.1) ;
	\draw[-triangle 45] (4.18,-2.79)--(3.29,-2) ;
	\end{tikzpicture}
	\qquad
	\caption{\emph{Sub-edges, sub-triangles and support of test functions.}}
	\label{F:bubbles}
\end{figure}
Scaling arguments, similar to those in the proofs of Lemmas \ref{L:aux1} and \ref{L:aux2}, yield: if $k\in\mathbb{N}_0$, we have
\begin{equation}
\label{->*}
 \norma{V}_{0,T}
 \lesssim
 \norma{V}_{0,T^*}
\quad\text{and}\quad
  \norma{V}_{0,E}
  \lesssim
  \norma{V}_{0,E^*}
\end{equation} 
for all $V\in\P^k(T)$ or $V\in\P^k(E)$, where the hidden constants depend on $M$, $k$, and the minimal angle in $\T$ but not on the choices of $T^*$ and $E^*$.

\step{2} Let us now prove a lower bound of the local error in terms of any given element residual $h_T^{2+2\theta} \left\|\Delta U\right\|_{0,T}^2$, $T\in\T$.  To this end, we only need to consider $\ell\geq2$ and observe that then $\Delta U\in\P(T)^{\ell-2}$.  Using $\varphi_T =  \Delta U \, \eta_{T^*}$, we derive
\begin{align*}
 \norma{\Delta U}_{0,T}^2
 &\lesssim
 \norma{\Delta U}_{0,T^*}^2
 \lesssim
 \int_{T^*} \Delta U \varphi_T
 =
 -\int_{T^*} \nabla U \cdot \nabla \varphi_T
 =
 \int_{T^*} \nabla (u-U) \cdot \nabla \varphi_T
\\
 &\lesssim
 \left|u-U\right|_{1-\theta,T^*} |\nabla\varphi_T|_{\theta,T^*}
 \lesssim
 h_T^{-1-\theta} \left|u-U\right|_{1-\theta,T}
 \norma{\Delta U}_{0,T}
\end{align*}
with the help of the first inequality in \eqref{->*}, Lemma~\ref{L:aux1}, integration by parts, the choice of $T^*$, Lemma \ref{L:local-cont} and $h_{T^*}\leq h_T\leq M h_{T^*}$. Multiplying by $h_T^{1+\theta} \norma{\Delta U}_{0,T}^{-1}$
and squaring we arrive at
\begin{equation}
\label{int_res_bound}
 h_T^{2+2\theta}  \norma{\Delta U}_{0,T}^2
 \lesssim
 \left|u-U\right|_{1-\theta,T}^2.
\end{equation}

\step{3} Next, we provide a lower bound for the local error in terms of any given jump residual $ \norma{\llbracket\nabla U\rrbracket}_{0,E}^2$, $E$ edge of $\T$.  Let $T_1$, $T_2$ be the two triangles of $\T$ sharing the edge $E$, and let $U^1$, $U^2$, $n^1$, $n^2$ denote the restrictions of $U$ and the outer normals of $T_1$, $T_2$, respectively. Then $\llbracket\nabla U\rrbracket
= \nabla U^1 \cdot n^1 + \nabla U^2 \cdot n^2 \in \P^{\ell-1}(E)$ and denote by $\overline{\llbracket\nabla U\rrbracket}$ its extension from Lemma \ref{L:aux2}.  Using $\varphi_E = \overline{\llbracket\nabla U\rrbracket} \, \eta_{E^*}$, we derive
\begin{align*}
 &\norma{\llbracket\nabla U\rrbracket}_{0,E}^2
 \lesssim
 \norma{\llbracket\nabla U\rrbracket}_{0,E^*}^2
 \lesssim
 \int_{E^*} \llbracket\nabla U\rrbracket \varphi_E
 =
 \sum_{i=1,2} \int_{E^*} \nabla U^i \cdot n^i \, \varphi_E
\\
 &\quad =
 \sum_{i=1,2} \int_{T_i^*} \nabla U \cdot \nabla \varphi_E
                 + \Delta U \, \varphi_E
 =
 \sum_{i=1,2}  \int_{T_i^*} \nabla (U-u) \cdot \nabla \varphi_E
 +    \Delta U \, \varphi_E
\\
 &\quad \lesssim
 \sum_{i=1,2}
  | u-U |_{1-\theta,T_i^*} | \nabla\varphi_E |_{\theta,T_i^*}
   + \norma{\Delta U}_{0,T_i^*} 
      \| \varphi_E \|_{0,T_i^*} \quad
\\
 &\quad \lesssim
 \sum_{i=1,2}
  h_E^{-\frac12 - \theta} 
   \left|u-U\right|_{1-\theta,T_i}
   \| \llbracket\nabla U\rrbracket \|_{0,E}
  +
  h_E^{\frac12}  \norma{\Delta U}_{0,T_i} 
   \| \llbracket\nabla U\rrbracket \|_{0,E}
\end{align*}
with the help of the second inequality in \eqref{->*}, Lemma \ref{L:aux2}, integration by parts, the choice of $E^*$, Lemma \ref{L:local-cont} and $h_{E^*}\leq h_E\leq M h_{E^*}$.  After multiplying by $ h_E^{\frac{1}{2}+\theta} \norma{\llbracket\nabla U\rrbracket}_{0,E}^{-1} $, we obtain
\begin{equation}
\label{jump_res_bound}
  h_E^{\frac{1}{2}+\theta} \norma{\llbracket\nabla U\rrbracket}_{0,E}
  \lesssim
  \sum_{i=1,2}
   \left|u-U\right|_{1-\theta,T_i}
    + h_E^{1+\theta} \norma{\Delta U}_{0,T_i}.
\end{equation}

\step{4} The claimed lower bound in terms of $\eta_T$, $T\in\T$, follows by combining the squares of \eqref{int_res_bound} and \eqref{jump_res_bound} for the involved triangles and interelement edges; recall that $h_E \lesssim h_T$ whenever $E$ is an edge of $T$ and that for boundary edges $E \subset \partial\Omega$, we have set $\llbracket\nabla U\rrbracket_{|E} = 0$.

The global lower bound is a direct consequence of local one: sum the square of all local ones and take into account that the cardinality of $\{T'\in\T \mid \omega_{T'} \supset T \}$ is bounded in terms of the shape coefficient of $\T$.
\end{proof}

\section{Numerical Results}
\label{S:experiments}
%
%
In this section, we numerically test the a~posteriori error estimators of \S\ref{S:apost-analysis}.  To this end, we use it in the adaptive solution of two examples of Problem \eqref{Problem} and analyze resulting properties of the adaptive algorithm.  

The adaptive algorithm, which was implemented within the finite element toolbox ALBERTA~\cite{ALBERTA}, has the following structure.  Given $\theta$ and a conforming initial triangulation $\T_0$ of $\Omega$, it iterates the main steps
\begin{equation}
\label{loop}
\textsc{Solve} \quad \longrightarrow \quad
\textsc{Estimate} \quad \longrightarrow \quad
\textsc{Mark} \quad \longrightarrow \quad
\textsc{Refine}.
\end{equation}
The step \textsc{Solve} consists in solving the discrete system \eqref{galerkin} for the current triangulation $\T$ and linear elements.  The step \textsc{Estimate} then computes the a~posteriori error estimator \eqref{est} and the step \textsc{Mark} selects triangles for refinement by means of the maximum strategy: $T \in \T$ is marked whenever $\eta_{T,\theta} > 0.5 \max_{T' \in \T} \eta_{T',\theta}$. In the step \textsc{Refine}, these marked triangles are bisected twice so that each of their edges is halved.  In doing so, further triangles are bisected in order to maintain the conformity of the next triangulation.
\begin{example}[Fundamental solution]
\label{E:fund-sol}
Consider Problem \eqref{Problem} with data such that
\[
 u(x) = -\frac1{2\pi} \log |x|,
\qquad
 x \in \Omega := (-1,1)^2
\]
is the exact solution, together with the parameter values $\theta = 0$, $0.125$, $0.250$, $0.375$, and $0.5$ for the adaptive algorithm.  Notice that, for $\theta=0$, the error estimator formally corresponds to the infinite error $|u-U|_{1,\Omega}$.  Moreover, $\theta=0.5$ is not covered by the analysis given in \S\ref{S:apost-analysis}, but the convexity of $(-1,1)^2$ suggests that the equivalence of $\eta_{1/2}$ and $|u-U|_{1/2,\Omega}$ is still given.
\end{example}

For given $\theta>0$, the exact solution $u$ formally has almost $\theta$ derivatives (in $L^2$) more than required in the error $|u-U|_{1-\theta,\Omega}$.  Thus, with increasing $\theta$, quasi-uniform meshes ensure increasing error decay, suggesting that the grading of meshes generated with $\eta_\theta$ decreases with increasing $\theta$.  Figure \ref{F:ex1_meshes} confirms this expectation, as well as the corresponding meshes for the intermediate values $\theta = 0.125$, $0.375$, which are not shown.
\begin{figure}[h!tbp]
\begin{center}

\includegraphics[width=.3\textwidth]{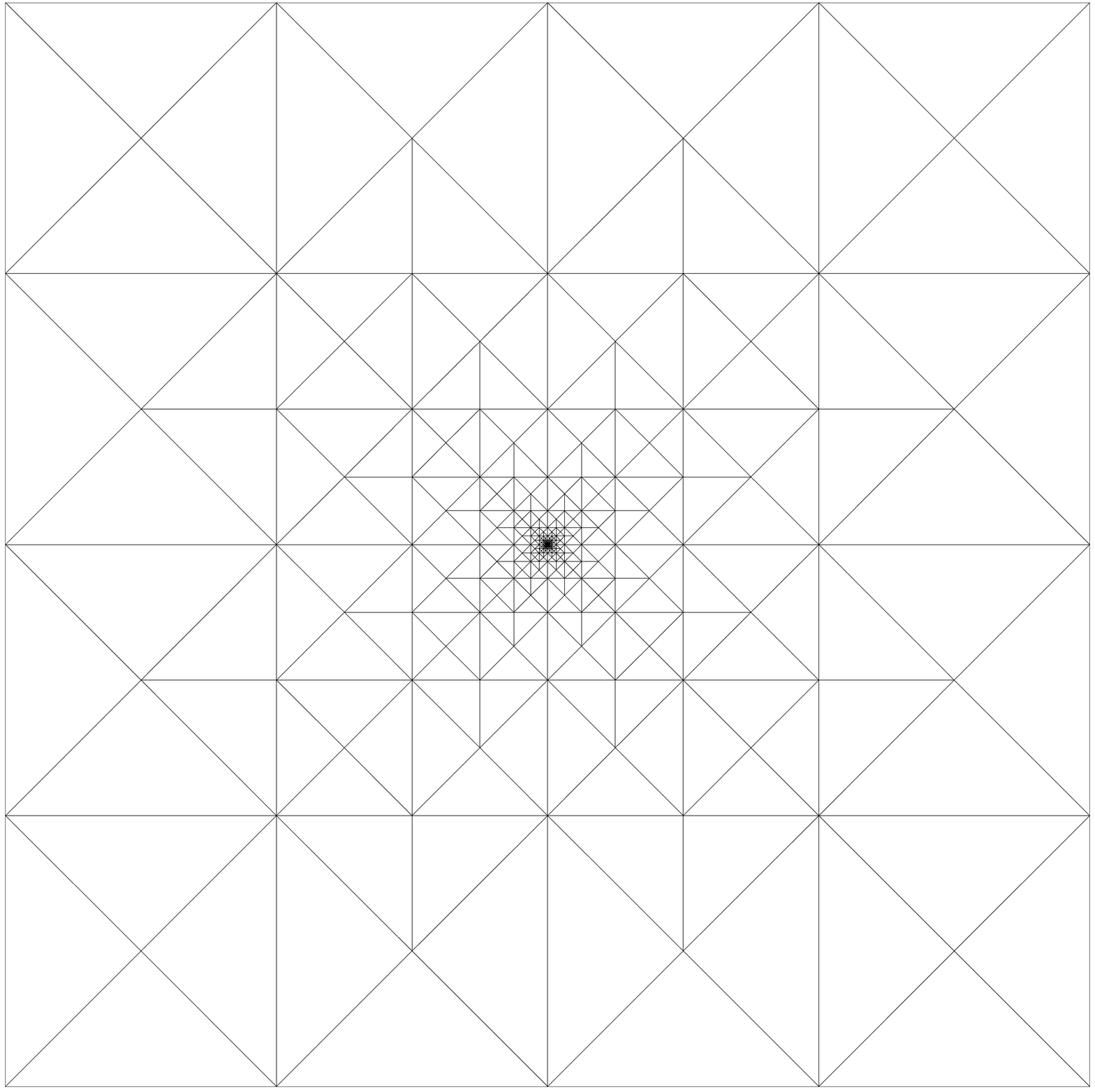}
\includegraphics[width=.3\textwidth]{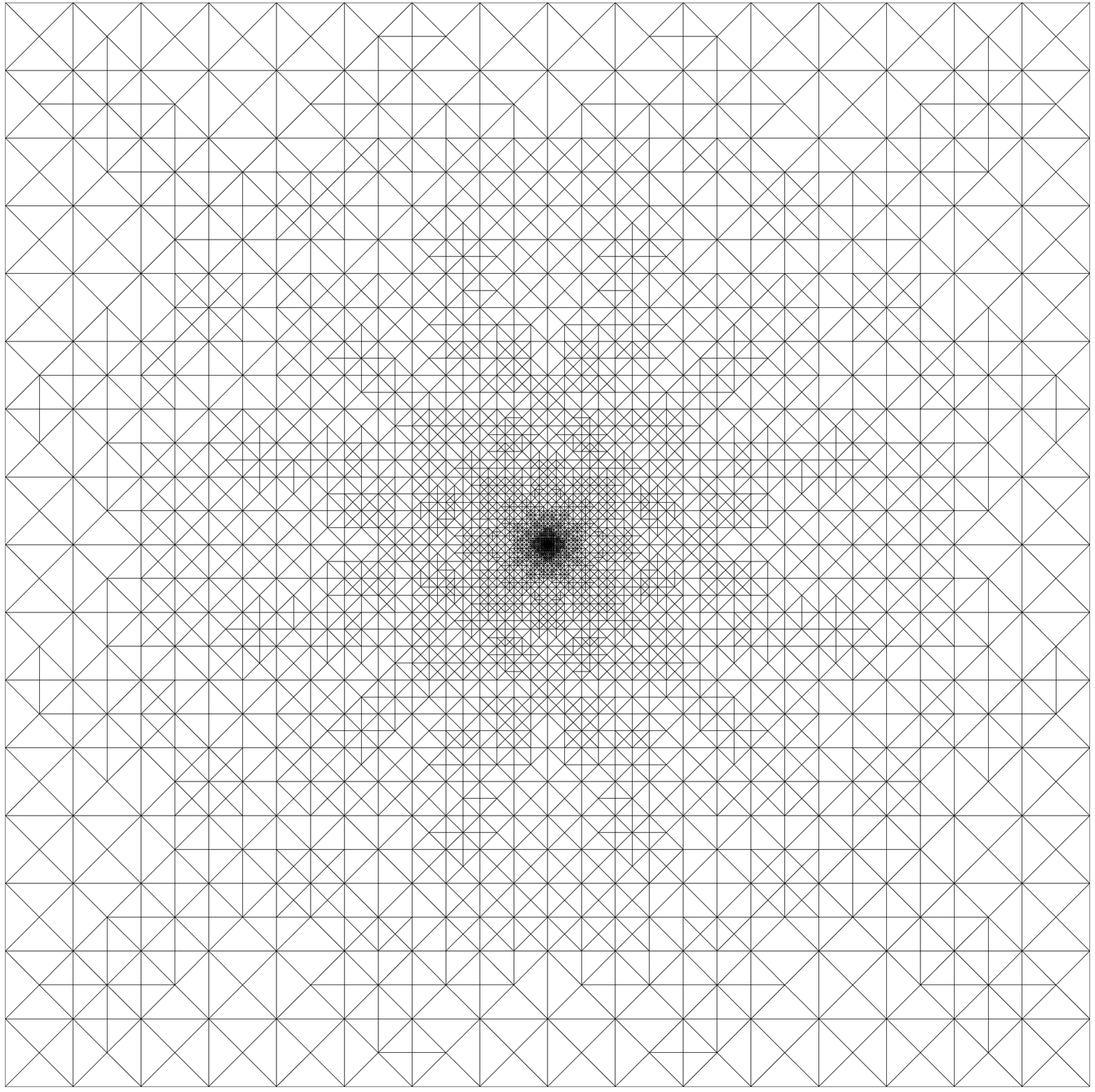}
\includegraphics[width=.3\textwidth]{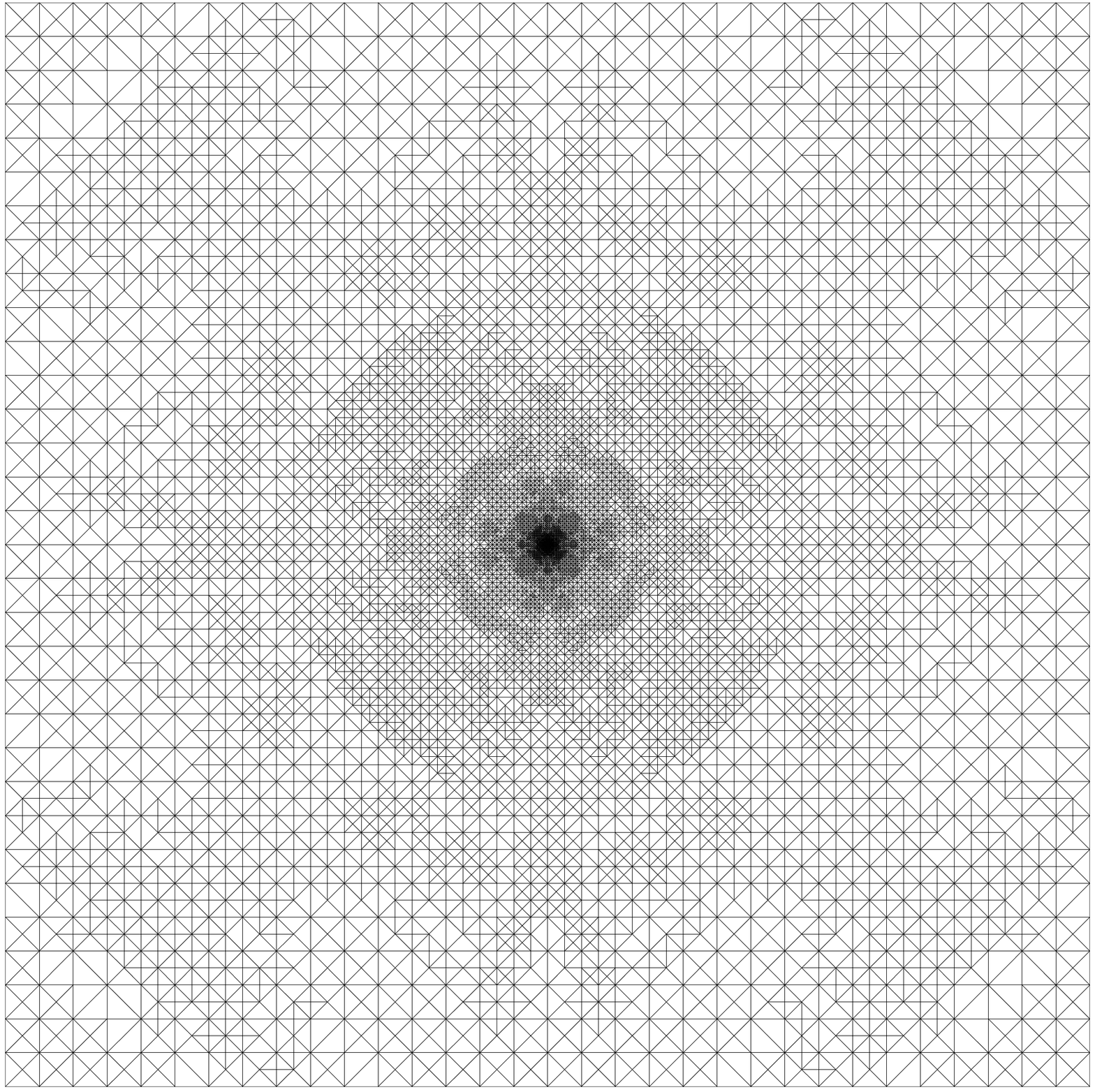}
\end{center}
\caption{\label{F:ex1_meshes}%
\small
\emph{Mesh grading and error norm for the fundamental solution.}  The triangulations after 20 iterations of \eqref{loop} for $\theta = 0.000$, $0.250$, $0.500$ (from left to right) illustrate that the mesh grading decreases with increasing $\theta$, which corresponds to weakening the error norm.}
\end{figure}
If $\theta$ is small, the mesh grading is very strong: for $\theta=0$, the triangles at the origin of meshes with about 1000 degrees of freedom (DOFs) have areas smaller than $10^{-16}$.  In the case of $\theta = 0.125$, this happens for meshes with about 5000 DOFs.

A next step in our numerical testing of the estimator $\eta_\theta$ could be to study the decay rate of the estimated error $|U-u|_{1-\theta,\Omega}$.  This will be done for the second, more involved example.  Here we shall instead study decay rates of two error notions, for which the estimator $\eta_\theta$ is not originally designed.  The first error notion is $|u-U|_{1,\Omega^0}$ with $\Omega^0 = \Omega\setminus B(0;\tfrac{1}{4}) = \{ (x,y) \in \Omega : |x|+|y| > \tfrac{1}{4}\}$.  Since $u\in H^2(\Omega^0)$, the maximum decay rate for it with linear finite elements is $\#\text{DOFs}^{-1/2}$, reached for example by uniform refinement.
The second error notion is the $L^2$-error $|u-U|_{0,\Omega}$.  Here we have $D^2u\in L^p(\Omega)$ for any $p\in(0,1)$ and thus $u$ is an element of the Besov space $B^2_p(L^p(\Omega))$.  Consequently, the maximum decay rate with linear finite elements is $\#\text{DOFs}^{-1}$, reached for example by thresholding \cite[Theorem~5.1]{Binev.Dahmen.DeVore.Petrushev:02}.  Figure~\ref{F:ex1_curves} suggests that, except for $\theta=0$, the meshes generated with $\eta_\theta$ provide the maximum decay rate for both errors.
\begin{figure}[h!tbp]
	\includegraphics[width=.47\textwidth]{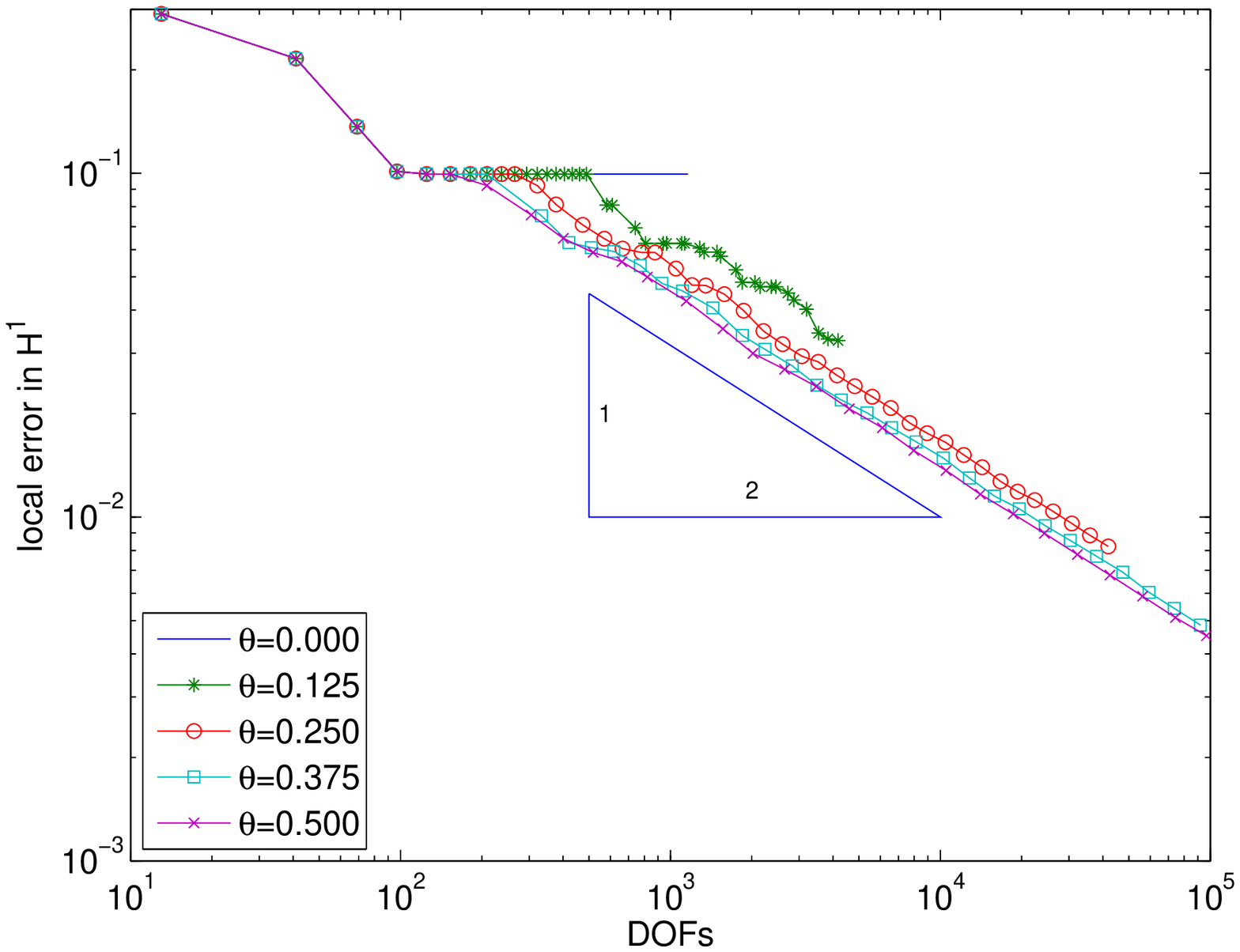}
	\includegraphics[width=.47\textwidth]{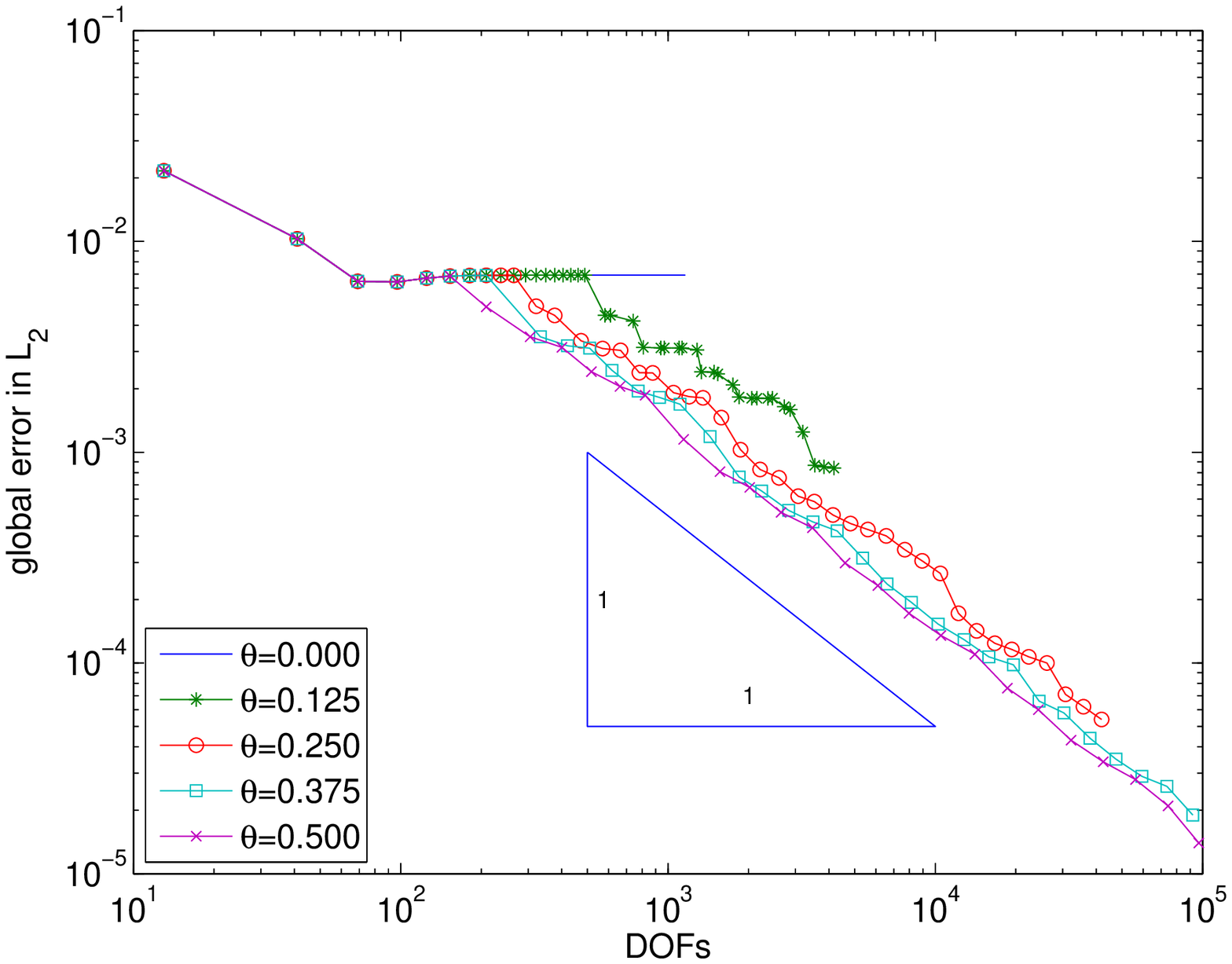}
	\caption{\label{F:ex1_curves}%
		\small
		\emph{Convergence histories of the $H^1$-error off the singularity (left) and the $L^2(\Omega)$-error (right) for the fundamental solution.}  In both cases, we plot error versus $\#\text{DOFs}$ in logarithmic scales.  The plots for $\theta=0$, $0.125$ and $0.25$ prematurely end because we encounter triangles whose areas are below $10^{-16}$.}
\end{figure}
We however notice an advantage for greater values of $\theta$, where the initial stagnation of the error decay is shorter.  This stagnation expresses the difference between the observed error notion and the estimator, which puts more importance to the singularity at the origin.
For $\theta = 0$, the fact that $u\not\in H^1(\Omega)$ appears to be reflected in an infinite stagnation.

\begin{example}[Point sources and reentrant corner]
\label{E:reentrant corner}
Consider the non-convex L-shaped domain $\Omega = (-1,1)^2 \setminus \big( [0,1)\times(-1,0] \big)$ and the boundary value problem
\begin{equation*}
\begin{aligned}
 -\Delta u
 &=
 \delta_{(0.33, 0.66)}
  + \delta_{(-0.251, -0.85)} + \delta_{(-0.25, -0.87)}
 \quad & &\text{in }\Omega,
\\
 u &= 0 & &\text{on }\partial\Omega.
\end{aligned}
\end{equation*}
\end{example}
The exact solution is not known to us;  see Figure \ref{F:ex2_solutions} for an approximate solution and corresponding triangulation. 
\begin{figure}[h!tbp]
	\begin{center}
		\includegraphics[width=.55\textwidth]{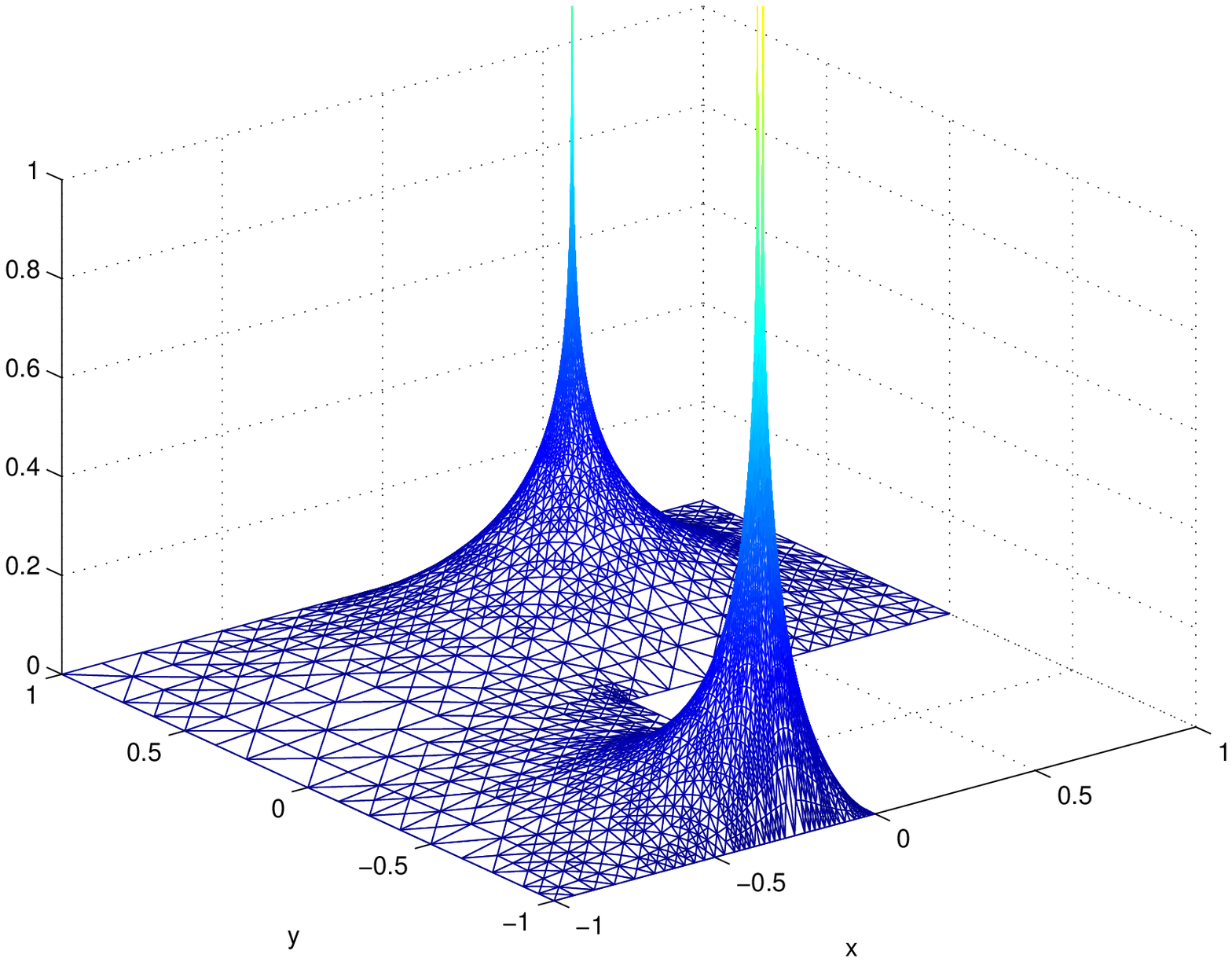}
		\hfil
		\includegraphics[width=.43\textwidth]{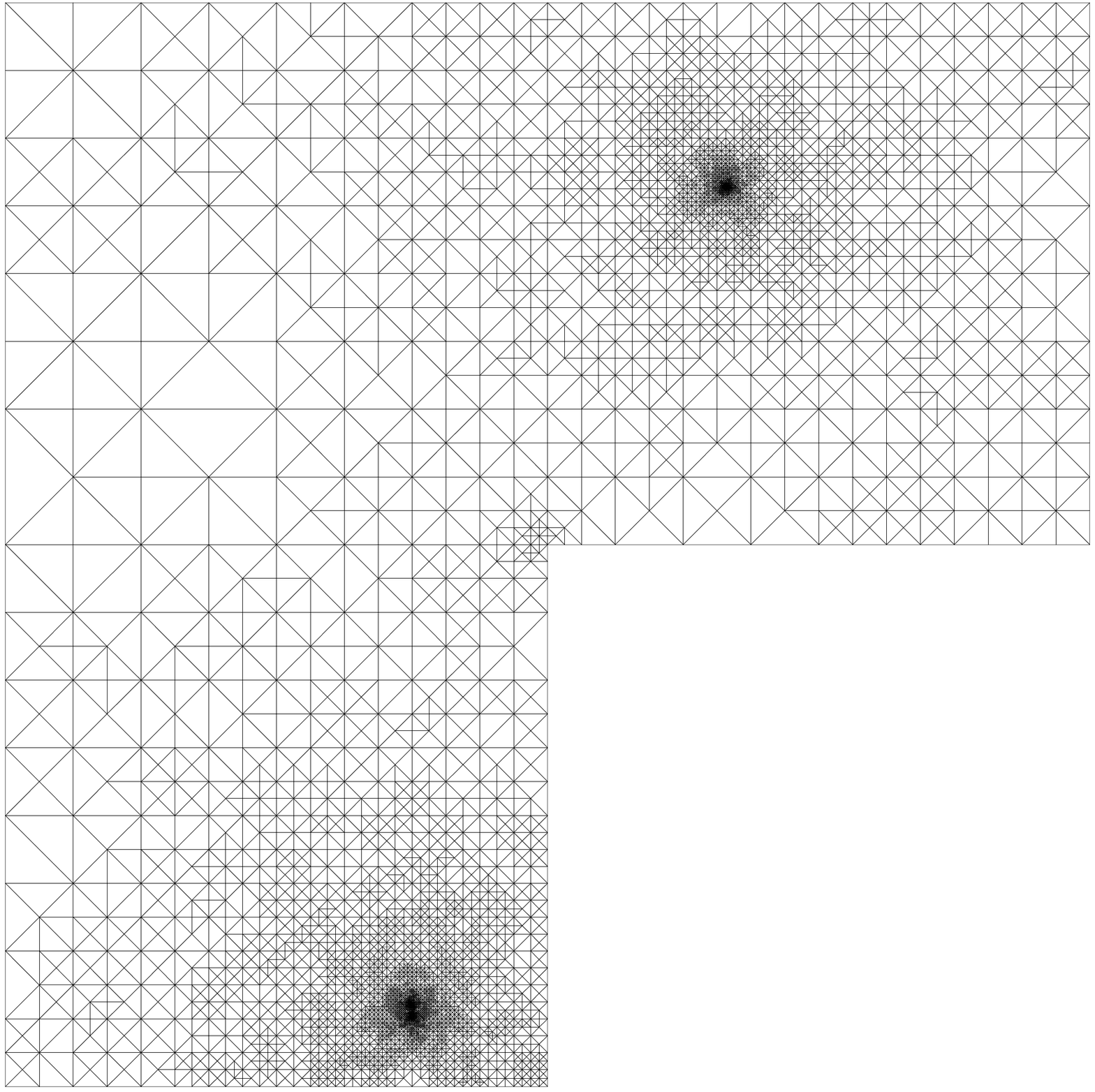}
		
	\end{center}
	\caption{\label{F:ex2_solutions}%
		\small
		\emph{Approximate solution and triangulation for Example~\ref{E:reentrant corner}}
		after 25 iterations with $\theta=0.375$.}
\end{figure}
Notice that boundary condition and right-hand side are compatible at the reentrant corner.  Nevertheless, the refinement at the reentrant corner indicates some non-compatibility of the error. The three point sources entail that the exact solution has three logarithmic-type singularities, two of them being very close.  As for Example \ref{E:fund-sol} and independently of data compatibility at the reentrant corner, we thus have $D^2u \in L^p(\Omega)$ for all $p\in(0,1)$.  For $\theta\in(0,1]$, the maximum decay rate for the approximation with linear finite elements in $|\cdot|_{1-\theta,\Omega}$ is therefore $\#\text{DOFs}^{-(1+\theta)/2}$, again reached for example by thresholding \cite[Theorem~5.1]{Binev.Dahmen.DeVore.Petrushev:02}.
Figure \ref{F:ex2_curves} indicates that these maximum decay rates are also obtained with the triangulation generated with the help of the estimator $\eta_\theta$; machine precision prevents a possibly better confirmation in the cases $\theta = 0.125$ and $0.25$.

\begin{figure}[h!tbp]
\begin{minipage}{.49\textwidth}
\includegraphics[width=.9\textwidth]{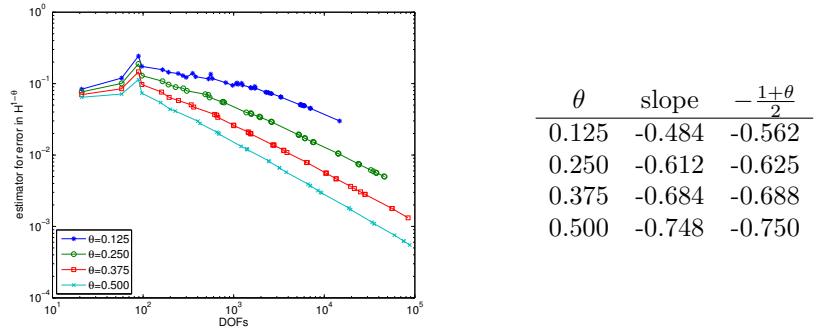}
\end{minipage}
\hfil
\begin{minipage}{.33\textwidth}
\begin{tabular}{ccc}
$\theta$ & slope & $-\frac{1+\theta}2$ \\
\hline
 0.125 &  -0.484  & -0.562 \\
 0.250 &  -0.612  & -0.625 \\
 0.375 &  -0.684  & -0.688 \\
 0.500 &  -0.748  & -0.750 
\end{tabular}
\end{minipage}
\caption{\label{F:ex2_curves}%
\small
\emph{Convergence histories of $H^{1-\theta}$-error} for Example \ref{E:reentrant corner}.  We plot the a posteriori estimator, which is equivalent to the $H^{1-\theta}$-error by \S\ref{S:apost-analysis}, versus $\#\text{DOFs)}$  in logarithmic scales.  The plots for $\theta=0$, $0.125$, and $0.25$ prematurely end because we encounter triangles whose areas are below $10^{-16}$. The table on the right shows the slope of the final part of each curve obtained by a least squares fit.}
\end{figure}

\end{document}